\documentclass[preprint,12pt]{elsarticle}

\usepackage[utf8]{inputenc}
\usepackage{amsmath,amssymb,amsthm,mathtools}
\usepackage{geometry}
\usepackage{xcolor}
\newtheorem{theorem}{Theorem}[section]
\newtheorem{lemma}[theorem]{Lemma}
\newtheorem{proposition}[theorem]{Proposition}
\newtheorem{definition}[theorem]{Definition}
\newtheorem{remark}[theorem]{Remark}

\newtheorem{corollary}[theorem]{Corollary}

\newcommand{\Pn}{\mathbb{P}_n}
\newcommand{\Hn}{\mathbb{H}_n}
\newcommand{\Tr}{\operatorname{Tr}}
\newcommand{\diag}{\operatorname{diag}}

\begin{document}
	
	\begin{frontmatter}
		
		
		
		\title{Weighted Two-Parameter Quantum Divergences Generated by Matrix Power-Mean Gaps}
		
		\author[label1,label2,label3]{Trung Hoa Dinh}
		\ead{hoa.dinhtrung@vlu.edu.vn}
		 \affiliation[label1]{organization={Laboratory for Applied and Industrial Mathematics, Institute for Computational Science and Artificial Intelligence, Van Lang University},
			             city={Ho Chi Minh City},
			             country={Vietnam}}
		\affiliation[label2]{organization={Faculty of Basic Sciences, Van Lang University},
			             city={Ho Chi Minh City},
			             country={Vietnam}}
		\affiliation[label3]{organization={Department of Mathematics and Statistics, Troy University},
						city={Troy},
			             postcode={36082},
			             state={AL},
			country={USA}}
		
		\author[d]{Minh Toan Ho} 
		\ead{hmtoan@math.ac.vn}
		\affiliation[d]{organization={Institute of Mathematics, VAST},
			addressline={18 Hoang Quoc Viet}, 
			city={Ha Noi},
			country={Vietnam}}

	\author[e]{Ha Trang Nguyen} 
	\ead{trangnh@utc.edu.vn}
\affiliation[e]{organization={Department of Mathematical Analysis, Faculty of Basic Sciences, University of Transport and Communications},
	addressline={3 Cau Giay Street, Lang Ward}, 
	city={Ha Noi},
	country={Vietnam}}
	
	\author[f,g]{Trung Dung Vuong} 
	\ead{vtdung@ptnk.edu.vn}
\affiliation[f]{organization={High School for the Gifted, VNUHCM},
	addressline={153 Nguyen Chi Thanh, An Dong Ward}, 
	city={Ho Chi Minh City},
	country={Vietnam}}
\affiliation[g]{organization={Vietnam National University Ho Chi Minh City},
	addressline={Linh Xuan Ward}, 
	city={Ho Chi Minh City},
	country={Vietnam}}
			
		\begin{abstract}
			Let $\Pn$ be the cone of $n\times n$ complex positive definite matrices.
			For $0<\theta<1$ and $r\ge0$, let $M_r^{(\theta)}$ denote the weighted
			matrix power mean, with the logarithmic endpoint at $r=0$, defined by
			\[
			M_r^{(\theta)}(A,B)
			=
			\begin{cases}
				\big((1-\theta)A^r+\theta B^r\big)^{1/r}, & r>0,\\[2mm]
				\exp\big((1-\theta)\log A+\theta\log B\big), & r=0.
			\end{cases}
			\]
			For $p>q\ge0$, we introduce
			\[
			\Phi_{p,q}^{(\theta)}(A,B)
			=
			\Tr\big[M_p^{(\theta)}(A,B)-M_q^{(\theta)}(A,B)\big].
			\]
			We prove that $\Phi_{p,q}^{(\theta)}$ is a quantum divergence in the
			sense of Bhatia--Gaubert--Jain. Moreover, for
			\[
			0\le q\le1\le p\le2,\qquad p>q,
			\]
			it is jointly convex and satisfies the data processing inequality for
			completely positive trace-preserving maps, with the output side interpreted
			by the regularized boundary convention whenever it is singular.
			
			The construction recovers Hellinger-type and Jensen--Shannon-type
			quantities as special cases or limits. We also compute the induced local
			quadratic form and provide scalar and matrix obstructions showing that the
			convexity and data-processing range cannot be extended to all parameters.
		\end{abstract}
		
		\begin{keyword}
		Quantum divergence; weighted power mean; trace inequality; joint convexity; data processing inequality; positive definite matrices.
		\end{keyword}
		
	\end{frontmatter}
	
		
		\section{Introduction}\label{sec:introduction}
		Divergences between positive matrices and quantum states are fundamental
		objects in matrix analysis and quantum information theory.  They provide
		quantitative ways to compare noncommutative probability distributions, and
		their structural properties often reflect important information-theoretic
		principles such as positivity, distinguishability, convexity, and monotonicity
		under quantum channels.  Classical and quantum examples include relative
		entropy, Hellinger-type distances, Bures-type quantities, and
		Jensen--Shannon-type divergences.  Their analysis typically combines trace
		inequalities, matrix convexity, operator means, and the geometry of the cone
		of positive definite matrices; see, for instance,
		\cite{BhatiaMA,BhatiaPD,HiaiPetz,Watrous}.
		
		One systematic way to construct noncommutative divergences is to compare two
		matrix means by taking a trace gap.  This point of view was developed in a
		particularly transparent form by Bhatia, Gaubert, and Jain~\cite{BGJ}, who
		studied matrix versions of the Hellinger distance and introduced a natural
		local differential framework for quantum divergences.  Related constructions
		have since appeared in several directions: quantum Hellinger distances
		\cite{PitrikVirosztek}, divergences with \(p\)-power means \cite{LamLe},
		Bures--Wasserstein-type divergences involving matrix power means
		\cite{DinhLeVoVuong}, divergences associated with Kubo--Ando-type mean
		constructions \cite{DinhDuNguyenVuong}, divergences generated by
		monotonicity inequalities \cite{DinhLeOsakaPhan}, and trace-gap divergences
		associated with spectral geometric means and weighted spectral geometric
		means \cite{GanJeongKim,JeongKimTam}.  These results show that matrix means
		are not merely averaging procedures; they also form a flexible mechanism for
		producing noncommutative distances and information measures.
		
		The purpose of the present paper is to introduce and study a weighted
		two-parameter family of quantum divergences generated by trace gaps along the
		matrix power-mean scale.  Let \(\Pn\) denote the cone of \(n\times n\)
		positive definite complex matrices.  For \(0<\theta<1\) and \(r>0\), define
		the weighted matrix power mean
		\[
		M_r^{(\theta)}(A,B)
		=
		\bigl((1-\theta)A^r+\theta B^r\bigr)^{1/r},
		\qquad A,B\in\Pn,
		\]
		and define its logarithmic endpoint by
		\[
		M_0^{(\theta)}(A,B)
		=
		\exp\bigl((1-\theta)\log A+\theta\log B\bigr).
		\]
		For \(p>q\ge0\), we study the trace gap
		\[
		\Phi_{p,q}^{(\theta)}(A,B)
		=
		\Tr\bigl[M_p^{(\theta)}(A,B)-M_q^{(\theta)}(A,B)\bigr],
		\qquad A,B\in\Pn.
		\]
		This construction compares two different levels of the same weighted
		power-mean scale.  It is genuinely two-parameter, since both the upper and
		lower means are allowed to vary, and it includes the logarithmic endpoint
		\(q=0\).  The line \(p=1\) compares the weighted arithmetic mean with a
		lower-order power mean, while the symmetric case \(\theta=1/2\) contains the arithmetic-versus-power-mean family studied in \cite{LamLe}.  The present
		family is broader because it keeps both parameters \(p,q\) and the weight
		\(\theta\) free.
		
		Although the definition is expressed in terms of matrix means, the weighted
		power means used here are not, in general, Kubo--Ando means as functions of
		the pair \((A,B)\), except in special cases such as the arithmetic mean.
		Thus the positivity, convexity, and data-processing results obtained below
		do not follow directly from the standard Kubo--Ando theory of operator means
		\cite{KuboAndo}.  Instead, the paper relies on a different set of ideas:
		trace monotonicity along the matrix power-mean scale, the Carlen--Lieb
		convexity and concavity theory for Minkowski-type trace functionals
		\cite{CarlenLiebI,CarlenLiebII}, and an explicit second-order analysis on
		the diagonal.  This distinguishes the present construction from both
		Kubo--Ando trace gaps and the spectral-geometric-mean divergences studied in
		\cite{GanJeongKim,JeongKimTam}.
		
		The first main result, proved in Theorem~\ref{thm:main-divergence}, is a
		full-parameter divergence theorem.  We show that for every \(0<\theta<1\)
		and every \(p>q\ge0\), the functional
		\[
		\Phi_{p,q}^{(\theta)}(A,B)
		=
		\Tr\bigl[M_p^{(\theta)}(A,B)-M_q^{(\theta)}(A,B)\bigr]
		\]
		is a quantum divergence on \(\Pn\) in the sense of
		Bhatia--Gaubert--Jain~\cite{BGJ}.  In particular, it is nonnegative, it
		vanishes if and only if \(A=B\), its first derivative in the second variable
		vanishes on the diagonal, and its second derivative in the second variable is
		positive definite on the diagonal.  The proof of nonnegativity uses a trace
		comparison along the power-mean scale, while the strict equality condition
		comes from the strict equality case in Jensen's operator inequality for
		\(t^\alpha\), \(0<\alpha<1\).
		
		A key feature of the proof is that the local quadratic form induced by
		\(\Phi_{p,q}^{(\theta)}\) can be computed explicitly.  If
		\(A=\diag(a_1,\ldots,a_n)\) and \(H=(h_{ij})\in\Hn\), then
		\[
		D_2^2\Phi_{p,q}^{(\theta)}(A,A)[H,H]
		=
		4\theta(1-\theta)
		\left[
		\sum_i \frac{p-q}{4a_i}|h_{ii}|^2
		+
		\sum_{i<j}
		\bigl(\Gamma_p(a_i,a_j)-\Gamma_q(a_i,a_j)\bigr)|h_{ij}|^2
		\right],
		\]
		where, for \(r>0\) and \(a\ne b\),
		\[
		\Gamma_r(a,b)
		=
		\frac{a^{1-r}b^r+b^{1-r}a^r-a-b}{2r(a-b)^2},
		\]
		with the corresponding logarithmic endpoint at \(r=0\).  The positivity of
		this Hessian is reduced to the strict convexity, in the parameter \(r\), of
		the scalar function
		\[
		r\longmapsto x^r+x^{1-r}.
		\]
		Thus the construction gives not only a global trace-gap divergence, but also
		a completely explicit local Riemannian-type quadratic form on the positive
		cone.
		
		The second main result concerns joint convexity and data processing.  In the
		Carlen--Lieb parameter range
		\[
		0\le q\le1\le p\le2,\qquad p>q,
		\]
		we prove in Theorem~\ref{thm:joint-convexity} that
		\(\Phi_{p,q}^{(\theta)}\) is jointly convex on \(\Pn\times\Pn\).  Indeed,
		the upper trace functional
		\[
		(A,B)\longmapsto \Tr M_p^{(\theta)}(A,B)
		\]
		is jointly convex for \(1\le p\le2\), while the lower trace functional is
		jointly concave for \(0\le q\le1\).  Their difference is therefore jointly
		convex.  Using this convexity together with the standard
		dilation--partial-trace argument for quantum channels, we prove in
		Theorem~\ref{thm:dpi} that, after the regularized boundary convention
		described in Section~\ref{sec:convexity-dpi},
		\[
		\Phi_{p,q}^{(\theta)}(\Lambda(A),\Lambda(B))
		\le
		\Phi_{p,q}^{(\theta)}(A,B)
		\]
		for every completely positive trace-preserving map \(\Lambda\).  The
		boundary convention is needed because Stinespring dilation naturally
		produces singular operators on the larger system, especially when the
		logarithmic endpoint \(q=0\) is present.
		
		The family also contains several familiar quantities as special cases or
		limits.  The point \((p,q)=(1,1/2)\) gives
		\[
		\Phi_{1,1/2}^{(\theta)}(A,B)
		=
		\theta(1-\theta)\Tr(A^{1/2}-B^{1/2})^2,
		\]
		which is a weighted square-root Hellinger trace gap; compare
		\cite{BGJ,PitrikVirosztek}.  The point \((p,q)=(1,0)\) gives a
		log-Euclidean Hellinger-type trace gap.  Moreover, the diagonal limit
		\[
		\lim_{\varepsilon\downarrow0}
		\frac{\Phi_{1+\varepsilon,1}^{(\theta)}(A,B)}{\varepsilon}
		\]
		recovers the weighted quantum Jensen--Shannon divergence, a quantity studied
		in \cite{BrietHarremoes2009,LambertiQJSD}.  At the logarithmic endpoint we
		also compute
		\[
		\lim_{\varepsilon\downarrow0}
		\frac{\Phi_{\varepsilon,0}^{(\theta)}(A,B)}{\varepsilon}
		=
		\frac{\theta(1-\theta)}{2}
		\Tr\left[
		M_0^{(\theta)}(A,B)(\log A-\log B)^2
		\right].
		\]
		These formulas place Hellinger-type, log-Euclidean, and
		Jensen--Shannon-type quantities inside one weighted power-mean framework.
		
		Finally, we examine how far the convexity and data-processing range can be
		extended.  In Section~\ref{sec:sharpness}, scalar restrictions show that
		joint convexity fails for every \(1<q<p\).  For \(p>2\), failure already
		occurs at \(q=1\), as a consequence of the nonconvexity of the
		Carlen--Lieb trace functional outside its convexity range
		\cite{CarlenLiebI,CarlenLiebII}.  In the remaining scalar region
		\(0<q<p<1\), we derive an exact one-dimensional criterion for convexity of
		the scalar restriction.  At the logarithmic endpoint \(q=0\), this criterion
		becomes explicit: the scalar restriction is convex if and only if
		\[
		p\ge \max\{\theta,1-\theta\}.
		\]
		Equivalently, scalar convexity fails precisely when
		\[
		0<p<\max\{\theta,1-\theta\}.
		\]
		These obstruction results show that the range \(0\le q\le1\le p\le2\) is
		natural for joint convexity and data processing, while the full matrix-level
		classification in the sublinear region \(0<q<p<1\) remains open.
		
		The paper is organized as follows.  Section~\ref{sec:preliminaries} fixes
		notation and definitions.  Section~\ref{sec:divergence-proof} proves the
		full-parameter divergence theorem for \(\Phi_{p,q}^{(\theta)}\), including
		the explicit diagonal Hessian formula.  Section~\ref{sec:limits-special-cases} discusses special cases, limiting procedures, parameter monotonicity, and the
		induced local quadratic form.  Section~\ref{sec:convexity-dpi} proves joint
		convexity and the data processing inequality in the range
		\(0\le q\le1\le p\le2\).  Section~\ref{sec:sharpness} gives scalar and
		matrix obstructions outside this range.

		\section{Preliminaries}\label{sec:preliminaries}
		
		Let $\Pn$ denote the cone of $n\times n$ positive definite complex
		matrices, and let $\Hn$ denote the real vector space of Hermitian
		matrices. For $A\in\Pn$ and $r\in\mathbb R$, the power $A^r$ is defined
		by the usual functional calculus.
		
		\begin{definition}
			Let \(0<\theta<1\). For \(r>0\), define the weighted matrix power mean
			\[
			M_r^{(\theta)}(A,B)
			=
			\big((1-\theta)A^r+\theta B^r\big)^{1/r},
			\qquad A,B\in\Pn,
			\]
			and define its logarithmic endpoint by
			\[
			M_0^{(\theta)}(A,B)
			=
			\exp\big((1-\theta)\log A+\theta\log B\big).
			\]
			We write
			\[
			T_r^{(\theta)}(A,B)=\Tr M_r^{(\theta)}(A,B).
			\]
			For \(p>q\ge0\), set
			\[
			\Phi_{p,q}^{(\theta)}(A,B)
			=
			T_p^{(\theta)}(A,B)-T_q^{(\theta)}(A,B)
			=
			\Tr\big[M_p^{(\theta)}(A,B)-M_q^{(\theta)}(A,B)\big].
			\]
			When \(\theta=1/2\), we abbreviate
			\[
			M_r=M_r^{(1/2)},\qquad
			\Phi_{p,q}=\Phi_{p,q}^{(1/2)}.
			\]
		\end{definition}
		
		\begin{definition}
			A smooth function $\Phi:\Pn\times\Pn\to\mathbb R$ is called a quantum
			divergence in the sense of Bhatia--Gaubert--Jain if the following
			conditions hold:
			\begin{enumerate}
				\item $\Phi(A,B)\ge0$ for all $A,B\in\Pn$, and
				\[
				\Phi(A,B)=0\quad\Longleftrightarrow\quad A=B.
				\]
				\item For every $A\in\Pn$ and every $H\in\Hn$,
				\[
				D_2\Phi(A,A)[H]=0.
				\]
				\item For every $A\in\Pn$ and every nonzero $H\in\Hn$,
				\[
				D_2^2\Phi(A,A)[H,H]>0.
				\]
			\end{enumerate}
			Here $D_2$ and $D_2^2$ denote the first and second Fréchet derivatives
			with respect to the second variable. 
		\end{definition}
		
		\section{Proof of the divergence theorem}\label{sec:divergence-proof}
		
		This section proves that \(\Phi_{p,q}^{(\theta)}\) is a quantum divergence
		in the sense of Bhatia--Gaubert--Jain. The proof is divided into three
		parts: global nonnegativity with a strict equality condition, vanishing of
		the first derivative on the diagonal, and positivity of the diagonal
		Hessian.
		
		\subsection{Nonnegativity and equality condition}
		
		We first prove the global positivity property. The key point is that no
		operator monotonicity of $x^{1/q}$ is required; the trace comparison is
		obtained from eigenvalue monotonicity.
		
		\begin{proposition}\label{prop:nonneg}
			Let $0<\theta<1$ and let $p>q\ge0$. Then
			\[
			\Phi_{p,q}^{(\theta)}(A,B)\ge0
			\]
			for all $A,B\in\Pn$. Moreover,
			\[
			\Phi_{p,q}^{(\theta)}(A,B)=0
			\quad\Longleftrightarrow\quad
			A=B.
			\]
		\end{proposition}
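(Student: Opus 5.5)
The plan is to show that $r\mapsto T_r^{(\theta)}(A,B)$ is nondecreasing on $[0,\infty)$ and strictly increasing when $A\neq B$. The nonnegativity and the equality statement then follow at once, because $A=B$ gives $M_r^{(\theta)}(A,A)=A$ for every $r$, so $\Phi_{p,q}^{(\theta)}(A,A)=0$.

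\textbf{Step 1: the case $0<q<p$.} Set $\alpha=q/p\in(0,1)$ and $X=A^p$, $Y=B^p$. The function $t\mapsto t^\alpha$ is operator concave, so
\[
(1-\theta)X^\alpha+\theta Y^\alpha\le\bigl((1-\theta)X+\theta Y\bigr)^\alpha .
\]
The left side is $(M_q^{(\theta)})^q$ and the right side is $(M_p^{(\theta)})^q$, so $(M_q^{(\theta)})^q\le (M_p^{(\theta)})^q$ in the Loewner order. By Weyl monotonicity, $\lambda_j(C)\le\lambda_j(D)$ for every $j$ whenever $C\le D$. The map $t\mapsto t^{1/q}$ is increasing, although it need not be operator monotone, so applying it to the eigenvalues gives $\lambda_j(M_q^{(\theta)})\le\lambda_j(M_p^{(\theta)})$ for all $j$. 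Summing over $j$ yields $T_q^{(\theta)}\le T_p^{(\theta)}$.

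\textbf{Step 2: equality when $q>0$.} Suppose the traces are equal. Then every eigenvalue inequality from Step 1 is an equality, so $C:=(M_q^{(\theta)})^q$ and $D:=(M_p^{(\theta)})^q$ satisfy $C\le D$ and $\Tr C=\Tr D$. Hence $D-C\ge0$ has zero trace, which forces $C=D$, and Jensen's operator inequality is an equality. Next I invoke the strict form of that equality case for $t^\alpha$ with $0<\alpha<1$ and weights $(1-\theta,\theta)$, both nonzero: equality forces $X=Y$. One way to obtain this is through the integral representation
\[
t^\alpha=c_\alpha\int_0^\infty\Bigl(\frac{t}{t+s}\Bigr)s^{\alpha-1}\,ds .
\]
The gap $(1-\theta)f(X)+\theta f(Y)-f((1-\theta)X+\theta Y)$ for $f(t)=-\tfrac{t}{t+s}=\tfrac{s}{t+s}-1$ can be written explicitly as a positive quadratic expression in $(X+s)^{-1}-(Y+s)^{-1}$. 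That expression vanishes only if $X=Y$. From $X=Y$ we get $A^p=B^p$, hence $A=B$. I expect this strict equality case to be the main technical point; if the integral computation becomes unwieldy, an alternative is to use the Hansen--Pedersen/Choi characterization of equality for strictly operator concave functions.

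\textbf{Step 3: the logarithmic endpoint $q=0$.} Here I use the Lie--Trotter type limit $M_r^{(\theta)}(A,B)\to M_0^{(\theta)}(A,B)$ as $r\downarrow0$, which holds because
\[
\bigl((1-\theta)A^r+\theta B^r\bigr)^{1/r}=\exp\Bigl(\tfrac1r\log\bigl(I+r L+O(r^2)\bigr)\Bigr),
\qquad L=(1-\theta)\log A+\theta\log B .
\]
Therefore $T_0^{(\theta)}=\lim_{r\downarrow0}T_r^{(\theta)}$. Fix $0<r<p$. Monotonicity from Step 1 gives $T_0^{(\theta)}\le T_{r}^{(\theta)}\le T_p^{(\theta)}$. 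If $A\ne B$, Step 2 gives the strict inequality $T_{p/2}^{(\theta)}<T_p^{(\theta)}$, and combining with $T_0^{(\theta)}\le T_{p/2}^{(\theta)}$ yields $T_0^{(\theta)}<T_p^{(\theta)}$. This also settles the equality case at $q=0$.
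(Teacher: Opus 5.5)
Your proposal is correct and follows essentially the same route as the paper. Both arguments use operator concavity of $t^{q/p}$ with Weyl monotonicity for the trace comparison, force equality in the operator Jensen inequality from a zero-trace positive semidefinite difference, and handle $q=0$ by letting $r\downarrow0$ in $T_0^{(\theta)}\le T_r^{(\theta)}\le T_p^{(\theta)}$ and applying the strict positive-parameter case at an intermediate $r$. The one difference is your self-contained proof of the strict equality case, which the paper only cites; it works, since the resolvent gap equals $s\,\theta(1-\theta)E_sW_s^{-1}E_s$ with $E_s=(X+s)^{-1}-(Y+s)^{-1}$ and $W_s=\theta(X+s)^{-1}+(1-\theta)(Y+s)^{-1}>0$.
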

		
		\begin{proof}
			We first treat the case \(p>q>0\). Put
			\[
			\alpha=\frac{q}{p}\in(0,1),\qquad
			X=A^p,\qquad Y=B^p,
			\]
			and
			\[
			C=(1-\theta)X+\theta Y,\qquad
			D=(1-\theta)X^\alpha+\theta Y^\alpha
			=(1-\theta)A^q+\theta B^q.
			\]
			By the operator concavity of \(t^\alpha\), \(0<\alpha<1\), and the
			strict equality case in the two-point Jensen inequality
			\cite{BhatiaMA,BhatiaPD,HansenPedersen},
			\[
			D\le C^\alpha,
			\]
			with equality if and only if \(X=Y\).
			
			Since \(D\le C^\alpha\), Weyl monotonicity of eigenvalues
			\cite[p.~63]{BhatiaMA} gives
			\[
			\lambda_j(D)\le \lambda_j(C^\alpha),\qquad j=1,\ldots,n.
			\]
			Because the scalar function \(x\mapsto x^{1/q}\) is strictly increasing,
			\[
			\Tr D^{1/q}
			=
			\sum_{j=1}^n \lambda_j(D)^{1/q}
			\le
			\sum_{j=1}^n \lambda_j(C^\alpha)^{1/q}
			=
			\Tr (C^\alpha)^{1/q}
			=
			\Tr C^{1/p}.
			\]
			Thus
			\[
			T_q^{(\theta)}(A,B)\le T_p^{(\theta)}(A,B),
			\]
			and hence \(\Phi_{p,q}^{(\theta)}(A,B)\ge0\).
			
			If \(A=B\), then \(M_p^{(\theta)}(A,A)=M_q^{(\theta)}(A,A)=A\), so
			\(\Phi_{p,q}^{(\theta)}(A,A)=0\).
			
			Conversely, assume that \(\Phi_{p,q}^{(\theta)}(A,B)=0\). Then equality
			holds in the above trace comparison. Hence
			\[
			\lambda_j(D)=\lambda_j(C^\alpha),\qquad j=1,\ldots,n.
			\]
			This implies
			\[
			\Tr(C^\alpha-D)=0,
			\]
			and $C^\alpha-D\ge0$, therefore \(D=C^\alpha\). Thus equality holds in the two-point
			Jensen inequality for \(t^\alpha\), and the strict equality case gives
			\(X=Y\). Hence \(A^p=B^p\), and by injectivity of the functional
			calculus on \(\Pn\), \(A=B\).
			
			It remains to treat the endpoint \(q=0\). For \(r>0\), write
			\[
			S_r(A,B)=T_r^{(\theta)}(A,B).
			\]
			Indeed,
			\[
			S_s(A,B)-S_r(A,B)=\Phi_{s,r}^{(\theta)}(A,B),
			\]
			and the case \(s>r>0\) already proved gives
			\(\Phi_{s,r}^{(\theta)}(A,B)\ge0\), with equality if and only if \(A=B\). Moreover,
			\[
			\lim_{r\downarrow0}S_r(A,B)=S_0(A,B),
			\]
			because
			\[
			\lim_{r\downarrow0}
			\big((1-\theta)A^r+\theta B^r\big)^{1/r}
			=
			\exp\big((1-\theta)\log A+\theta\log B\big).
			\]
			Letting \(r\downarrow0\) in \(S_r(A,B)\le S_p(A,B)\) gives
			\[
			S_0(A,B)\le S_p(A,B),
			\]
			that is,
			\[
			\Phi_{p,0}^{(\theta)}(A,B)\ge0.
			\]
			If equality holds, then for every \(r\in(0,p)\),
			\[
			S_0(A,B)\le S_r(A,B)\le S_p(A,B)=S_0(A,B).
			\]
			Thus \(S_r(A,B)=S_p(A,B)\) for some \(0<r<p\). By the strict equality
			case already proved for positive parameters, \(A=B\).
		\end{proof}
		
		\subsection{The first derivative on the diagonal}
		
		We now verify the vanishing of the first derivative on the diagonal.
		
		\begin{proposition}\label{prop:first-derivative}
			Let $0<\theta<1$ and let $r\ge0$. Then, for every $A\in\Pn$ and every
			$H\in\Hn$,
			\[
			D_2T_r^{(\theta)}(A,A)[H]
			=
			\theta\,\Tr H.
			\]
			Consequently, for $p>q\ge0$,
			\[
			D_2\Phi_{p,q}^{(\theta)}(A,A)[H]=0.
			\]
		\end{proposition}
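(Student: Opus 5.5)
We compute the derivative by the chain rule for functional calculus, together with the standard trace identity for Fréchet derivatives of spectral functions. For a $C^1$ function $f$ on $(0,\infty)$ and $X\in\Pn$, $K\in\Hn$,
\[
\frac{d}{dt}\Big|_{t=0}\Tr f(X+tK)=\Tr\big[f'(X)K\big].
\]
This follows from the Daleckii--Krein formula: the divided-difference matrix has diagonal entries $f'(\lambda_i)$, and the trace only sees the diagonal. The point is that on the diagonal $B=A$ every inner matrix commutes with $A$, so no genuinely noncommutative Fréchet derivative needs to be evaluated beyond this trace identity.

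\textbf{Case $r>0$.} Set $C(t)=(1-\theta)A^r+\theta(A+tH)^r$, so that $C(0)=A^r$ and
\[
T_r^{(\theta)}(A,A+tH)=\Tr C(t)^{1/r}.
\]
By the trace identity with $f(x)=x^{1/r}$,
\[
\frac{d}{dt}\Big|_{0}\Tr C(t)^{1/r}=\frac1r\Tr\big[(A^r)^{1/r-1}C'(0)\big]=\frac1r\Tr\big[A^{1-r}C'(0)\big].
\]
Here $C'(0)=\theta\,Dg(A)[H]$ with $g(x)=x^r$. The operator $Dg(A)[H]$ is not itself simple, but since $A^{1-r}$ is a function of $A$, we may work in an eigenbasis of $A$. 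In that basis, $\Tr[A^{1-r}Dg(A)[H]]$ picks up only the diagonal entries of $Dg(A)[H]$, namely $g'(a_i)h_{ii}=r a_i^{r-1}h_{ii}$. Hence
\[
\frac1r\Tr\big[A^{1-r}C'(0)\big]=\frac{\theta}{r}\sum_i a_i^{1-r}\,r a_i^{r-1}h_{ii}=\theta\Tr H.
\]

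\textbf{Case $r=0$.} Set $L(t)=(1-\theta)\log A+\theta\log(A+tH)$. Then $L(0)=\log A$ and $L'(0)=\theta\,D\log(A)[H]$. Applying the trace identity with $f=\exp$ gives
\[
\frac{d}{dt}\Big|_{0}\Tr e^{L(t)}=\Tr\big[A\,L'(0)\big].
\]
Again working in the eigenbasis of $A$, the diagonal entries of $D\log(A)[H]$ are $h_{ii}/a_i$, so
\[
\Tr\big[A\,L'(0)\big]=\theta\sum_i a_i\frac{h_{ii}}{a_i}=\theta\Tr H.
\]

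\textbf{Consequence for $\Phi_{p,q}^{(\theta)}$.} The value $\theta\Tr H$ does not depend on $r$, so subtracting the two derivatives gives $D_2\Phi_{p,q}^{(\theta)}(A,A)[H]=0$.

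\textbf{Smoothness.} The chain-rule step requires every map involved to be Fréchet differentiable. For $r>0$, $C(t)$ stays in $\Pn$ for small $t$, and $x^r$, $x^{1/r}$, $\log$ and $\exp$ are real-analytic on the relevant domains. The remaining point is that the directional derivative computed above equals the Fréchet derivative; this holds because the functional calculus of analytic functions is Fréchet differentiable on open sets of Hermitian matrices.
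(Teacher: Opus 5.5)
Your proposal is correct and follows the paper's argument: you apply the chain rule for $\Tr f$ with the outer derivative evaluated at a function of $A$, which reduces everything to $\frac1r\Tr[A^{1-r}D(A^r)[H]]$ and $\Tr[A\,D\log(A)[H]]$. The only difference is how these pairings are evaluated. You read off the diagonal Daleckii--Krein entries $r a_i^{r-1}h_{ii}$ and $h_{ii}/a_i$, whereas the paper gets the same values coordinate-free by differentiating the identities $\Tr(A+sH)=\Tr((A+sH)^r)^{1/r}$ and $\Tr(A+sH)=\Tr\exp\log(A+sH)$.
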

		
		\begin{proof}
			First assume $r>0$. Fix $A\in\Pn$ and $H\in\Hn$, and put
			\[
			B(s)=A+sH.
			\]
			For $|s|$ sufficiently small, $B(s)\in\Pn$. Define
			\[
			\psi_r(s)
			=
			T_r^{(\theta)}(A,A+sH)
			=
			\Tr\left[
			\big((1-\theta)A^r+\theta(A+sH)^r\big)^{1/r}
			\right].
			\]
			Let
			\[
			X_r(s)=(1-\theta)A^r+\theta(A+sH)^r.
			\]
			Then
			\[
			X_r(0)=A^r,
			\qquad
			X_r'(0)=\theta\,D(A^r)[H].
			\]
			By the chain rule for trace functions,
			\[
			\psi_r'(0)
			=
			\Tr\left[
			\frac1r (A^r)^{1/r-1}\,\theta D(A^r)[H]
			\right]
			=
			\frac{\theta}{r}\Tr\left[A^{1-r}D(A^r)[H]\right].
			\]
			Differentiating the identity
			\[
			\Tr(A+sH)
			=
			\Tr\big(((A+sH)^r)^{1/r}\big)
			\]
			at $s=0$ gives
			\[
			\Tr H
			=
			\frac1r\Tr\left[A^{1-r}D(A^r)[H]\right].
			\]
			Therefore
			\[
			\psi_r'(0)=\theta\,\Tr H.
			\]
			
			For $r=0$, we have
			\[
			\psi_0(s)
			=
			\Tr\exp\big((1-\theta)\log A+\theta\log(A+sH)\big).
			\]
			Using the trace derivative of the exponential,
			\[
			\frac{d}{ds}\Tr e^{L(s)}\bigg|_{s=0}
			=
			\Tr\big(e^{L(0)}L'(0)\big),
			\]
			with
			\[
			L(s)=(1-\theta)\log A+\theta\log(A+sH),
			\]
			we get
			\[
			\psi_0'(0)
			=
			\theta\,\Tr\big(A\,D(\log A)[H]\big).
			\]
			But differentiating
			\[
			\Tr(A+sH)=\Tr\exp(\log(A+sH))
			\]
			at $s=0$ gives
			\[
			\Tr H=\Tr\big(A\,D(\log A)[H]\big).
			\]
			Thus
			\[
			\psi_0'(0)=\theta\,\Tr H.
			\]
			
			Applying the formula to $r=p$ and $r=q$ gives
			\[
			D_2\Phi_{p,q}^{(\theta)}(A,A)[H]
			=
			\theta\Tr H-\theta\Tr H=0.
			\]
		\end{proof}
		
		\subsection{The second derivative on the diagonal}
		
		We now compute the Hessian explicitly and prove that it is positive
		definite.
		
		Before the computation, we recall the divided-difference notation used below.
		Let $f$ be a smooth scalar function on $(0,\infty)$. Its first divided
		difference is
		\[
		f^{[1]}(a,b)
		=
		\begin{cases}
			\dfrac{f(a)-f(b)}{a-b}, & a\ne b,\\[1.2ex]
			f'(a), & a=b.
		\end{cases}
		\]
		The second divided difference is defined recursively by
		\[
		f^{[2]}(a,b,c)
		=
		\frac{f^{[1]}(a,b)-f^{[1]}(b,c)}{a-c},
		\]
		when $a\ne c$, with the value at repeated points defined by continuity.
		
		We shall use the standard Daleckii--Krein divided-difference expansion
		for matrix functions; see, for example, \cite{BhatiaMA,HiaiPetz}. Namely,
		\[
		f(A+sH)
		=
		f(A)+sDf(A)[H]+s^2Q_f(A;H)+O(s^3),
		\]
		where, if $A=\diag(a_1,\ldots,a_n)$, then
		\[
		(Df(A)[H])_{ij}=f^{[1]}(a_i,a_j)h_{ij},
		\]
		and
		\[
		(Q_f(A;H))_{ii}
		=
		\sum_{j=1}^n f^{[2]}(a_i,a_j,a_i)|h_{ij}|^2.
		\]
		
		We next introduce the scalar coefficient that appears in the off-diagonal
		part of the Hessian. For $r>0$ and $a,b>0$, define
		\[
		\Gamma_r(a,b)
		=
		\frac{a^{1-r}b^r+b^{1-r}a^r-a-b}{2r(a-b)^2},
		\qquad a\ne b,
		\]
		and
		\[
		\Gamma_r(a,a)=\frac{r-1}{2a}.
		\]
		For $r=0$, define
		\[
		\Gamma_0(a,b)
		=
		\frac{\log(b/a)}{2(a-b)},
		\qquad a\ne b,
		\]
		and
		\[
		\Gamma_0(a,a)=-\frac1{2a}.
		\]
		These definitions are precisely the limits of \(\Gamma_r(a,b)\) as
		\(r\downarrow0\), both for \(a\ne b\) and for \(a=b\).
		
		\begin{lemma}\label{lem:C2-limit-r0}
			Let $A\in\Pn$, $H\in\Hn$, and $0<\theta<1$. For $r>0$, set
			\[
			\psi_r(s)
			=
			\Tr\left[
			\big((1-\theta)A^r+\theta(A+sH)^r\big)^{1/r}
			\right],
			\]
			and set
			\[
			\psi_0(s)
			=
			\Tr\exp\big((1-\theta)\log A+\theta\log(A+sH)\big).
			\]
			Then
			\[
			\psi_r''(0)\longrightarrow \psi_0''(0)
			\qquad (r\downarrow0).
			\]
			Consequently,
			\[
			D_2^2T_0^{(\theta)}(A,A)[H,H]
			=
			\lim_{r\downarrow0}D_2^2T_r^{(\theta)}(A,A)[H,H].
			\]
		\end{lemma}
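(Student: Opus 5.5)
Since both sides are unitarily covariant (replacing $(A,H)$ by $(UAU^*,UHU^*)$ leaves $\psi_r$ unchanged), we may assume $A=\diag(a_1,\ldots,a_n)$. The plan is to compute $\psi_r''(0)$ in closed form for every $r>0$, compute $\psi_0''(0)$ in closed form, and then compare the two explicit expressions as $r\downarrow0$.

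\emph{Step 1: the case $r>0$.} Write $X_r(s)=A^r+s\,\theta\,D(A^r)[H]+s^2\theta\,Q_{t^r}(A;H)+O(s^3)$, using the Daleckii--Krein expansion of $t\mapsto t^r$. Apply the second-order trace expansion of $g(X)=X^{1/r}$ at $X_r(0)=A^r$:
\[
\tfrac12\psi_r''(0)=\Tr\bigl[g'(A^r)\,\theta Q_{t^r}(A;H)\bigr]+\sum_{i,j}g^{[2]}(a_i^r,a_j^r,a_i^r)\,\bigl|\theta\,(t^r)^{[1]}(a_i,a_j)h_{ij}\bigr|^2 .
\]
Every ingredient is an explicit divided difference of power functions, so $\psi_r''(0)$ takes the form $\sum_{i,j}c_r(a_i,a_j)|h_{ij}|^2$. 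The same identity applied with $\theta=1$ (where $\psi$ becomes $\Tr(A+sH)$, whose second derivative is $0$) allows the first term to be eliminated. The goal is to show that this yields a coefficient expressed through $\Gamma_r(a_i,a_j)$ together with the diagonal terms, namely $c_r(a,b)=2\theta(1-\theta)(\Gamma_r(a,b)+\text{const})$, in agreement with the Hessian formula stated in the introduction.

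\emph{Step 2: the case $r=0$.} Set $L(s)=(1-\theta)\log A+\theta\log(A+sH)$, so that $L(0)=\log A$. Use the second-order expansion of $\Tr e^{L}$:
\[
\frac{d^2}{ds^2}\Tr e^{L}\Big|_0=\Tr\bigl(e^{L(0)}L''(0)\bigr)+\sum_{i,j}\exp^{[1]}(\log a_i,\log a_j)\,|L'(0)_{ij}|^2 ,
\]
where $L'(0)=\theta D\log(A)[H]$ and $L''(0)=2\theta Q_{\log}(A;H)$. Again we subtract the $\theta=1$ identity. This gives $\psi_0''(0)=\sum_{i,j}c_0(a_i,a_j)|h_{ij}|^2$ with an explicit $c_0$ built from $\log$ divided differences.

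\emph{Step 3: pass to the limit.} It remains to check that $c_r(a,b)\to c_0(a,b)$ for each fixed pair $a,b$, and since there are finitely many pairs this suffices. The coefficients $c_r$ are elementary expressions in $a^r,b^r,\log a,\log b$. For $a\ne b$ one expands $a^r=1+r\log a+\tfrac{r^2}{2}\log^2 a+O(r^3)$, and for $a=b$ the value comes from the diagonal formula. This reduces the convergence to $\Gamma_r\to\Gamma_0$, which the paper records just before the lemma.

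The main obstacle is the apparent singularity $1/r$ in $g(X)=X^{1/r}$. Individual terms such as $g^{[2]}(a^r,b^r,a^r)$ blow up like $r^{-2}$, while $(t^r)^{[1]}$ is $O(r)$, so cancellation has to be verified rather than assumed. I would therefore carry out the algebra in Step 1 fully before taking limits, expressing everything via $u=a^r$, $v=b^r$ and checking that the combination simplifies to the $\Gamma_r$ form, which is visibly regular as $r\downarrow0$.

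A fallback if the algebra becomes too heavy: for fixed $A$ and $H$, the map $(r,s)\mapsto\bigl((1-\theta)A^r+\theta(A+sH)^r\bigr)^{1/r}=\exp\bigl(\tfrac1r\log X_r(s)\bigr)$ extends real-analytically to $r=0$. This holds because $X_r(s)=I+rL(s)+O(r^2)$ uniformly in a neighbourhood of $s=0$, so $\tfrac1r\log X_r(s)$ is analytic in $(r,s)$ with value $L(s)$ at $r=0$. Joint smoothness then lets one interchange $\partial_s^2$ with $r\downarrow0$.

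The final claim of the lemma follows from $D_2^2T_r^{(\theta)}(A,A)[H,H]=\psi_r''(0)$.
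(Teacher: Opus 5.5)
Your main route is correct but differs from the paper's; your fallback is essentially the paper's proof. The paper never computes $\psi_r''(0)$ in this lemma. It writes $(1-\theta)A^r+\theta(A+sH)^r=I+rL(s)+r^2R(r,s)$ with $R$ bounded in $C^2$ in $s$, takes $\frac1r\log$ to get $L(s)+r\widetilde R(r,s)$, and exponentiates. It concludes that $M_r^{(\theta)}(A,A+sH)\to e^{L(s)}$ in $C^2$ in $s$, so $\psi_r''(0)\to\psi_0''(0)$ with no coefficient formulas at all. The lemma is then what produces the $r=0$ case of Proposition~\ref{prop:hessian-Tr}.

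Your explicit route works, and the $\theta=1$ subtraction makes it short. Since $\frac12\psi_r''(0)=\theta\,\mathcal T_1(H)+\theta^2\mathcal T_2(H)$ and the $\theta=1$ value vanishes, you get $\psi_r''(0)=-2\theta(1-\theta)\mathcal T_2(H)$. This works out to $2\theta(1-\theta)\sum_{i,j}\Gamma_r(a_i,a_j)|h_{ij}|^2$ exactly. There is no additive constant, and the diagonal is covered because $\Gamma_r(a,a)=\frac{r-1}{2a}$. Likewise, $\psi_0''(0)=-\theta(1-\theta)\sum_{i,j}\exp^{[1]}(\log a_i,\log a_j)\bigl(\log^{[1]}(a_i,a_j)\bigr)^2|h_{ij}|^2$. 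Using $\exp^{[1]}(\log a,\log b)=1/\log^{[1]}(a,b)$, this equals $2\theta(1-\theta)\sum_{i,j}\Gamma_0(a_i,a_j)|h_{ij}|^2$.

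Be careful in Step~3: you must carry out this last identification yourself. The paper's remark before the lemma only asserts $\Gamma_0=\lim_{r\downarrow0}\Gamma_r$. The paper's $r=0$ Hessian formula is derived \emph{from} this very lemma, so quoting it would be circular.

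As for what each approach buys:
\begin{itemize}
\item Your explicit route gives the $r=0$ Hessian independently, so the lemma becomes a corollary rather than an input. It also explains structurally why the paper's $\mathcal A_r$ and $\mathcal B_r$ cancel to a $\theta(1-\theta)$ multiple. The cost is the divided-difference algebra.
\item The paper's soft argument needs no coefficients and gives $C^2$ convergence $\psi_r\to\psi_0$ directly. It does, however, require uniform $C^2$ control of the remainders. Your real-analyticity formulation of the fallback, via $(r,s)\mapsto\frac1r\log X_r(s)$ extending analytically to $r=0$, supplies that control more cleanly than the paper's informal bounds.
\end{itemize}
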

		
		\begin{proof}
			For $s$ sufficiently close to $0$, the matrices $A+sH$ remain in a fixed
			compact subset of $\Pn$. Hence the map
			\[
			s\longmapsto \log(A+sH)
			\]
			is $C^2$ in operator norm, and the maps
			\[
			(r,s)\longmapsto A^r=\exp(r\log A),
			\qquad
			(r,s)\longmapsto (A+sH)^r=\exp(r\log(A+sH))
			\]
			are $C^2$ in $s$ and analytic in $r$ near $(0,0)$. Put
			\[
			L(s)=(1-\theta)\log A+\theta\log(A+sH).
			\]
			Taylor expansion of the exponential, uniformly in the $C^2$-topology with
			respect to $s$, gives
			\[
			(1-\theta)A^r+\theta(A+sH)^r
			=
			I+rL(s)+r^2R(r,s),
			\]
			where $R(r,s)$ is locally bounded in $C^2$ as $r\downarrow0$.
			For small $r$, the spectrum of $I+rL(s)+r^2R(r,s)$ stays in a fixed
			neighbourhood of $1$. The analytic expansion of the logarithm at the
			identity therefore gives, again in the $C^2$-topology in $s$,
			\[
			\frac1r\log\big(I+rL(s)+r^2R(r,s)\big)
			=
			L(s)+r\widetilde R(r,s),
			\]
			where $\widetilde R(r,s)$ is locally bounded in $C^2$. Consequently
			\[
			\left(I+rL(s)+r^2R(r,s)\right)^{1/r}
			=
			\exp\big(L(s)+r\widetilde R(r,s)\big)
			\]
			converges to $\exp(L(s))$ in $C^2$ with respect to $s$. Taking traces
			gives $\psi_r''(0)\to\psi_0''(0)$.
		\end{proof}
		
		\begin{proposition}\label{prop:hessian-Tr}
			Let $0<\theta<1$, $r\ge0$, and $A\in\Pn$. Choose an orthonormal basis in
			which
			\[
			A=\diag(a_1,\ldots,a_n),\qquad a_i>0.
			\]
			Let $H=(h_{ij})\in\Hn$. Then
			\[
			D_2^2T_r^{(\theta)}(A,A)[H,H]
			=
			4\theta(1-\theta)
			\left[
			\sum_i\frac{r-1}{4a_i}|h_{ii}|^2
			+
			\sum_{i<j}\Gamma_r(a_i,a_j)|h_{ij}|^2
			\right],
			\]
			where for $r=0$ the diagonal coefficient is interpreted as
			\[
			\frac{r-1}{4a_i}\bigg|_{r=0}=-\frac1{4a_i}.
			\]
		\end{proposition}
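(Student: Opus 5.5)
We treat $r>0$ first and obtain $r=0$ at the end from Lemma~\ref{lem:C2-limit-r0} by letting $r\downarrow0$. Fix $A=\diag(a_1,\dots,a_n)$ and $H$, and write $\psi_r(s)=T_r^{(\theta)}(A,A+sH)$. Put
\[
f(t)=t^r,\qquad g(t)=t^{1/r}.
\]
Using the Daleckii--Krein expansion,
\[
(A+sH)^r=A^r+sK_1+s^2K_2+O(s^3),
\]
with $(K_1)_{ij}=f^{[1]}(a_i,a_j)h_{ij}$ and $(K_2)_{ii}=\sum_j f^{[2]}(a_i,a_j,a_i)|h_{ij}|^2$. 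Hence
\[
X_r(s)=A^r+s\theta K_1+s^2\theta K_2+O(s^3).
\]
Applying $g$ at the diagonal point $A^r$, whose eigenvalues are $b_i=a_i^r$, and taking traces gives
\[
\Tr g(X_r(s))=\Tr A+s\theta\sum_i g'(b_i)(K_1)_{ii}+s^2\Big[\theta\sum_i g'(b_i)(K_2)_{ii}+\theta^2\sum_{i,j}g^{[2]}(b_i,b_j,b_i)|(K_1)_{ij}|^2\Big]+O(s^3).
\]
Here we use the second-order Daleckii--Krein formula for the trace, $\Tr Q_g(C;E)=\sum_{i,j}g^{[2]}(b_i,b_j,b_i)|e_{ij}|^2$. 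So $\tfrac12\psi_r''(0)$ equals the bracket.

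The key trick is to remove the $\theta$-linear term using the identity $\Tr g((A+sH)^r)=\Tr(A+sH)$. This is the same trick as in Proposition~\ref{prop:first-derivative}. Its $s^2$-coefficient vanishes, which gives
\[
\sum_i g'(b_i)(K_2)_{ii}=-\sum_{i,j}g^{[2]}(b_i,b_j,b_i)|(K_1)_{ij}|^2.
\]
Therefore
\[
\tfrac12\psi_r''(0)=(\theta^2-\theta)\sum_{i,j}g^{[2]}(b_i,b_j,b_i)\,f^{[1]}(a_i,a_j)^2|h_{ij}|^2 .
\]
This explains the factor $\theta(1-\theta)$ and reduces everything to a scalar identity. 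Multiplying by $2$ and grouping the $(i,j)$ and $(j,i)$ terms, the claim becomes:
\[
-2\,g^{[2]}(b_i,b_i,b_i)f'(a_i)^2=\frac{r-1}{a_i},
\qquad
-2\big[g^{[2]}(b_i,b_j,b_i)+g^{[2]}(b_j,b_i,b_j)\big]f^{[1]}(a_i,a_j)^2=4\Gamma_r(a_i,a_j)\ (i\ne j).
\]
The diagonal case is immediate. We have $g^{[2]}(b,b,b)=g''(b)/2=\tfrac{1}{2r}(\tfrac1r-1)b^{1/r-2}$ and $f'(a)=ra^{r-1}$. The product gives $-(1-r)/a$ after multiplying by $-2$, so the diagonal coefficient is $4\theta(1-\theta)\cdot\frac{r-1}{4a_i}$, as stated.

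The main computational step is the off-diagonal identity. With $b=a_i^r$, $c=a_j^r$, one has
\[
g^{[2]}(b,c,b)+g^{[2]}(c,b,c)=\frac{g'(b)+g'(c)-2g^{[1]}(b,c)}{b-c}\cdot\frac{1}{b-c},
\]
because $g^{[2]}(b,c,b)=(g^{[1]}(b,c)-g'(b))/(c-b)$. Also $g^{[1]}(b,c)=(a_i-a_j)/(b-c)$, and $f^{[1]}(a_i,a_j)=(b-c)/(a_i-a_j)$, so the factors $(b-c)^2$ cancel. What remains is
\[
-2\,\frac{\tfrac1r(a_i^{1-r}+a_j^{1-r})(b-c)-2(a_i-a_j)}{(a_i-a_j)^2}.
\]
Expanding $(a_i^{1-r}+a_j^{1-r})(a_i^r-a_j^r)=a_i-a_j+a_j^{1-r}a_i^r-a_i^{1-r}a_j^r$, I expect this to reduce to $4\Gamma_r(a_i,a_j)$ up to a sign bookkeeping that has to be checked carefully. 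This sign and symmetrization check is the step most likely to go wrong, and I would verify it against the $r=1$ case ($\Gamma_1=0$) and $r=1/2$.

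Finally, for $r=0$, each side of the formula for $r>0$ converges as $r\downarrow0$. On the left this is Lemma~\ref{lem:C2-limit-r0}. On the right, $\Gamma_r\to\Gamma_0$ and $(r-1)/(4a_i)\to-1/(4a_i)$ by the stated definitions. This yields the $r=0$ formula.
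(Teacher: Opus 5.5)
Your reduction is correct up to the final scalar identities, and it is a genuinely nicer route than the paper's. The paper computes the first-order contribution $\mathcal A_r$ (from $\Tr Dg_r(C)[Z]$) and the quadratic contribution $\mathcal B_r$ separately by hand, and only then observes $\mathcal A_r=-\mathcal B_r=2\Gamma_r$. Your observation instead gets this cancellation structurally: the $s^2$-coefficient of $\Tr\bigl((A+sH)^r\bigr)^{1/r}=\Tr(A+sH)$ vanishes, which is the case $\theta=1$ of the same expansion. This also explains the factor $\theta(1-\theta)$ and leaves only the quadratic term to compute. Your diagonal coefficient is also right.

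However, the off-diagonal identity is the real content of the proposition, and it is not established. The computation you sketch would not close, because the formula
\[
g^{[2]}(b,c,b)+g^{[2]}(c,b,c)=\frac{g'(b)+g'(c)-2g^{[1]}(b,c)}{(b-c)^2}
\]
is false; it even has the wrong degree of homogeneity in $(b,c)$. The expression you end with has a numerator that changes sign under $i\leftrightarrow j$, so it is antisymmetric, while $\Gamma_r$ is symmetric. At $r=1/2$ your expression vanishes identically, whereas $\Gamma_{1/2}(a,b)=-(\sqrt a+\sqrt b)^{-2}$. So this is not a sign-bookkeeping issue, and your own proposed $r=1/2$ check would have exposed it.

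The fix is that the two one-sided divided differences have opposite denominators:
$g^{[2]}(b,c,b)=\frac{g'(b)-g^{[1]}(b,c)}{b-c}$ and $g^{[2]}(c,b,c)=\frac{g'(c)-g^{[1]}(b,c)}{c-b}$.
Hence the $g^{[1]}$ terms cancel and
\[
g^{[2]}(b,c,b)+g^{[2]}(c,b,c)=\frac{g'(b)-g'(c)}{b-c}
=\frac{a_i^{1-r}-a_j^{1-r}}{r\,(a_i^r-a_j^r)}.
\]
Multiplying by $-2f^{[1]}(a_i,a_j)^2=-2(a_i^r-a_j^r)^2/(a_i-a_j)^2$ gives
\[
-\frac{2}{r}\,\frac{(a_i^{1-r}-a_j^{1-r})(a_i^r-a_j^r)}{(a_i-a_j)^2}
=\frac{2}{r}\,\frac{a_i^{1-r}a_j^r+a_j^{1-r}a_i^r-a_i-a_j}{(a_i-a_j)^2}
=4\Gamma_r(a_i,a_j),
\]
which is exactly what you needed. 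The coincident case $a_i=a_j$ with $i\neq j$ then follows by continuity, or directly since the bracket becomes $g''(b)$. Your passage to $r=0$ via Lemma~\ref{lem:C2-limit-r0} is the same as the paper's.
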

		
		\begin{proof}
			We first assume $r>0$. Let
			\[
			f_r(x)=x^r,\qquad g_r(x)=x^{1/r}.
			\]
			Put $B(s)=A+sH$ and
			\[
			\psi_r(s)=T_r^{(\theta)}(A,B(s)).
			\]
			Using the second-order expansion of $f_r(A+sH)$, write
			\[
			(A+sH)^r
			=
			A^r+sL_r+s^2Q_r+O(s^3),
			\]
			where
			\[
			(L_r)_{ij}=f_r^{[1]}(a_i,a_j)h_{ij},
			\]
			and
			\[
			(Q_r)_{ii}
			=
			\sum_j f_r^{[2]}(a_i,a_j,a_i)|h_{ij}|^2.
			\]
			Therefore
			\[
			(1-\theta)A^r+\theta(A+sH)^r
			=
			A^r+s\theta L_r+s^2\theta Q_r+O(s^3).
			\]
			Set
			\[
			C=A^r,\qquad Y=\theta L_r,\qquad Z=\theta Q_r.
			\]
			Then
			\[
			\psi_r(s)
			=
			\Tr g_r(C+sY+s^2Z+O(s^3)).
			\]
			The second-order trace expansion gives
			\[
			\psi_r(s)
			=
			\Tr g_r(C)
			+
			s\Tr Dg_r(C)[Y]
			+
			s^2
			\left\{
			\Tr Dg_r(C)[Z]
			+
			\sum_{i,j}g_r^{[2]}(c_i,c_j,c_i)|Y_{ij}|^2
			\right\}
			+
			O(s^3),
			\]
			where $c_i=a_i^r$. Hence
			\[
			\frac12\psi_r''(0)
			=
			\Tr Dg_r(C)[Z]
			+
			\sum_{i,j}g_r^{[2]}(c_i,c_j,c_i)|Y_{ij}|^2.
			\]
			
			For the diagonal entries, a direct substitution gives the coefficient of
			$|h_{ii}|^2$ in $\psi_r''(0)$:
			\[
			\theta(1-\theta)\frac{r-1}{a_i}
			=
			4\theta(1-\theta)\frac{r-1}{4a_i}.
			\]
			
			Now consider an off-diagonal pair $i<j$ and write $a=a_i$, $b=a_j$. We first treat the case $a\ne b$. The case $a=b$ follows by continuity of divided differences and of the resulting Hessian coefficient; indeed, the limit $b\to a$ gives
			\[
			\Gamma_r(a,a)=\frac{r-1}{2a}.
			\]
			Let
			\[
			\mathcal A_r(a,b)
			=
			g_r'(a^r)f_r^{[2]}(a,b,a)
			+
			g_r'(b^r)f_r^{[2]}(b,a,b),
			\]
			and
			\[
			\mathcal B_r(a,b)
			=
			\big(f_r^{[1]}(a,b)\big)^2
			\left[
			g_r^{[2]}(a^r,b^r,a^r)
			+
			g_r^{[2]}(b^r,a^r,b^r)
			\right].
			\]
			The coefficient of $|h_{ij}|^2$, grouped over the pair $i<j$, in
			$\psi_r''(0)$ is
			\[
			2\theta\,\mathcal A_r(a,b)+2\theta^2\,\mathcal B_r(a,b).
			\]
			For completeness, we record the algebra. One has
			\[
			f_r^{[1]}(a,b)=\frac{a^r-b^r}{a-b},
			\]
			\[
			f_r^{[2]}(a,b,a)
			=
			\frac{r a^{r-1}(a-b)-(a^r-b^r)}{(a-b)^2},
			\]
			and
			\[
			g_r^{[2]}(a^r,b^r,a^r)
			=
			\frac{\frac1r a^{1-r}(a^r-b^r)-(a-b)}
			{(a^r-b^r)^2}.
			\]
			The analogous formulas with $a$ and $b$ interchanged give, after
			substitution,
			\[
			\mathcal A_r(a,b)
			=
			\frac{a^{1-r}b^r+b^{1-r}a^r-a-b}{r(a-b)^2},
			\]
			and
			\[
			\mathcal B_r(a,b)
			=
			-\frac{a^{1-r}b^r+b^{1-r}a^r-a-b}{r(a-b)^2}.
			\]
			Therefore
			\[
			\mathcal A_r(a,b)=2\Gamma_r(a,b),
			\qquad
			\mathcal B_r(a,b)=-2\Gamma_r(a,b).
			\]
			Therefore the off-diagonal coefficient is
			\[
			2\theta(2\Gamma_r(a,b))
			+
			2\theta^2(-2\Gamma_r(a,b))
			=
			4\theta(1-\theta)\Gamma_r(a,b).
			\]
			This proves the formula for $r>0$.
			
			The case $r=0$ follows from Lemma~\ref{lem:C2-limit-r0} by taking the
			limit $r\downarrow0$ in the already established formula. The coefficients
			converge to
			\[
			-\frac1{4a_i}
			\quad\text{and}\quad
			\Gamma_0(a,b)
			=
			\frac{\log(b/a)}{2(a-b)}.
			\]
		\end{proof}
		
		The preceding formula reduces positivity of the Hessian of
		$\Phi_{p,q}^{(\theta)}$ to a scalar monotonicity property of
		$\Gamma_r$. This gives the following positivity result.
		
		\begin{theorem}\label{thm:hessian-positive}
			Let \(0<\theta<1\) and \(p>q\ge0\). Then, for every \(A\in\Pn\),
			the diagonal Hessian
			\[
			H\longmapsto D_2^2\Phi_{p,q}^{(\theta)}(A,A)[H,H]
			\]
			is positive definite on \(\Hn\). Equivalently, for every
			\(H\in\Hn\),
			\[
			D_2^2\Phi_{p,q}^{(\theta)}(A,A)[H,H]\ge0,
			\]
			with equality if and only if \(H=0\).
		\end{theorem}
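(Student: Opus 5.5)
By Proposition~\ref{prop:hessian-Tr}, applied with $r=p$ and $r=q$ and subtracted, in a basis where $A=\diag(a_1,\ldots,a_n)$ we have
\[
D_2^2\Phi_{p,q}^{(\theta)}(A,A)[H,H]
=
4\theta(1-\theta)\Bigl[\sum_i\frac{p-q}{4a_i}|h_{ii}|^2+\sum_{i<j}\bigl(\Gamma_p(a_i,a_j)-\Gamma_q(a_i,a_j)\bigr)|h_{ij}|^2\Bigr].
\]
Since $4\theta(1-\theta)>0$ and $(p-q)/(4a_i)>0$, the form is a positive combination of the $|h_{ii}|^2$ plus a combination of the $|h_{ij}|^2$, $i<j$. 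The plan is to show that every off-diagonal coefficient $\Gamma_p(a,b)-\Gamma_q(a,b)$ is strictly positive. Then the form is nonnegative, and it vanishes exactly when all $h_{ij}=0$, that is, when $H=0$. So the statement reduces to the scalar claim that $r\mapsto\Gamma_r(a,b)$ is strictly increasing on $[0,\infty)$ for every fixed pair $a,b>0$.

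For $a=b$ this is immediate, since $\Gamma_r(a,a)=(r-1)/(2a)$ is increasing in $r$, and the value at $r=0$ matches the convention $\Gamma_0(a,a)=-1/(2a)$.

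For $a\neq b$, normalize by setting $x=b/a\neq1$. Then $a^{1-r}b^r+b^{1-r}a^r=a\,(x^r+x^{1-r})$ and $a+b=a(1+x)$, so
\[
\Gamma_r(a,b)=\frac{F(r)-F(0)}{2a(1-x)^2\,r},\qquad F(r)=x^r+x^{1-r},
\]
where $F(0)=F(1)=1+x$. The quotient $(F(r)-F(0))/r$ is the slope of the chord of $F$ from $0$ to $r$. Since $F''(r)=(\log x)^2(x^r+x^{1-r})>0$ for $x\ne1$, $F$ is strictly convex, and therefore this chord slope is strictly increasing in $r>0$. At $r=0$ the slope tends to $F'(0)=\log x\,(1-x)$, which reproduces $\Gamma_0(a,b)=\log(b/a)/(2(a-b))$, consistent with the limiting definition. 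Hence $r\mapsto\Gamma_r(a,b)$ is strictly increasing on $[0,\infty)$ as well, and $\Gamma_p>\Gamma_q$ whenever $p>q\ge0$.

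The main step to handle carefully is the endpoint $q=0$. We must make sure that strict monotonicity of the chord slope extends down to the derivative value at $0$. This holds because, for a strictly convex function, $F'(0)<(F(r)-F(0))/r$ for every $r>0$. The remaining bookkeeping is checking that the normalization constant $2a(1-x)^2$ is positive and identical for both parameters, which is clear.
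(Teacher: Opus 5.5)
Your proposal is correct and follows essentially the same route as the paper. You subtract the Hessian formulas of Proposition~\ref{prop:hessian-Tr}, treat $a=b$ directly, and for $a\ne b$ write $\Gamma_r(a,b)$ as a normalized chord slope of the strictly convex function $r\mapsto x^r+x^{1-r}$, using the tangent-versus-secant comparison for $q=0$. The only cosmetic difference is that the paper derives the chord-slope monotonicity explicitly from the convex combination $q=\tfrac{q}{p}\,p+\bigl(1-\tfrac{q}{p}\bigr)\,0$, whereas you quote it as a standard property of convex functions.
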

		
		\begin{proof}
			Choose an orthonormal basis in which
			\[
			A=\diag(a_1,\ldots,a_n),\qquad a_i>0.
			\]
			By Proposition~\ref{prop:hessian-Tr},
			\[
			D_2^2\Phi_{p,q}^{(\theta)}(A,A)[H,H]
			=
			4\theta(1-\theta)
			\left[
			\sum_i\frac{p-q}{4a_i}|h_{ii}|^2
			+
			\sum_{i<j}
			\big(\Gamma_p(a_i,a_j)-\Gamma_q(a_i,a_j)\big)|h_{ij}|^2
			\right],
			\]
			with the convention for $\Gamma_0$ stated above.
			
			The diagonal coefficients are strictly positive because $p>q$ and
			$a_i>0$. It remains to prove
			\[
			\Gamma_p(a,b)>\Gamma_q(a,b)
			\]
			for all $a,b>0$ and $p>q\ge0$.
			
			If $a=b$, then
			\[
			\Gamma_r(a,a)=\frac{r-1}{2a}
			\]
			for $r>0$, and the same formula extends to $r=0$. Hence
			\[
			\Gamma_p(a,a)-\Gamma_q(a,a)
			=
			\frac{p-q}{2a}>0.
			\]
			
			Now suppose $a\ne b$ and put
			\[
			x=\frac ba\ne1.
			\]
			For $r>0$,
			\[
			\Gamma_r(a,b)
			=
			\frac1{2a(1-x)^2}
			\cdot
			\frac{x^r+x^{1-r}-(1+x)}{r}.
			\]
			Define
			\[
			\eta_x(r)=x^r+x^{1-r}.
			\]
			Then
			\[
			\eta_x(0)=1+x,
			\]
			and
			\[
			\Gamma_r(a,b)
			=
			\frac1{2a(1-x)^2}
			\cdot
			\frac{\eta_x(r)-\eta_x(0)}{r}.
			\]
			Moreover,
			\[
			\eta_x''(r)
			=
			(\log x)^2(x^r+x^{1-r})>0,
			\]
			because $x\ne1$. Thus $\eta_x$ is strictly convex.
			
			For $0<q<p$, write
			\[
			q=\frac qp\,p+\left(1-\frac qp\right)0.
			\]
			Here the correct weight is
			\[
			\lambda=\frac qp\in(0,1).
			\]
			Strict convexity gives
			\[
			\eta_x(q)
			<
			\frac qp\,\eta_x(p)
			+
			\left(1-\frac qp\right)\eta_x(0).
			\]
			Subtracting $\eta_x(0)$ and dividing by $q>0$, we obtain
			\[
			\frac{\eta_x(q)-\eta_x(0)}{q}
			<
			\frac{\eta_x(p)-\eta_x(0)}{p}.
			\]
			Therefore
			\[
			\Gamma_q(a,b)<\Gamma_p(a,b).
			\]
			
			For $q=0$, the same conclusion follows by taking the limit
			\[
			\Gamma_0(a,b)
			=
			\frac1{2a(1-x)^2}\eta_x'(0),
			\]
			and using the fact that for a strictly convex function the secant slope
			\[
			\frac{\eta_x(p)-\eta_x(0)}{p}
			\]
			is strictly larger than the tangent slope $\eta_x'(0)$.
			
			Hence every coefficient in the Hessian expansion is strictly positive.
			If $H\ne0$, at least one entry $h_{ij}$ is nonzero, and the corresponding
			positive coefficient makes the whole sum strictly positive.
		\end{proof}

		Combining the global nonnegativity and equality condition in
		Proposition~\ref{prop:nonneg}, the first-variation identity in
		Proposition~\ref{prop:first-derivative}, and the positive definiteness of
		the diagonal Hessian in Theorem~\ref{thm:hessian-positive}, we obtain the
		announced divergence theorem.
		
		\begin{theorem}\label{thm:main-divergence}
			Let \(0<\theta<1\) and let \(p>q\ge0\). Define
			\[
			\Phi_{p,q}^{(\theta)}(A,B)
			=
			\Tr\big[M_p^{(\theta)}(A,B)-M_q^{(\theta)}(A,B)\big],
			\qquad A,B\in\Pn.
			\]
			Then \(\Phi_{p,q}^{(\theta)}\) is a quantum divergence in the sense of
			Bhatia--Gaubert--Jain.
		\end{theorem}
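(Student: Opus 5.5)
The theorem follows by assembling the three results already proved, together with a smoothness check that the Bhatia--Gaubert--Jain definition presupposes. The plan is to verify each axiom in turn: positivity with its equality case, the vanishing first derivative, and the positive definite Hessian. Before that, we confirm that $\Phi_{p,q}^{(\theta)}$ is smooth on $\Pn\times\Pn$.

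\textbf{Smoothness.} For $r>0$, the maps $A\mapsto A^r$ and $X\mapsto X^{1/r}$ are real-analytic on $\Pn$ by holomorphic functional calculus. The matrix $(1-\theta)A^r+\theta B^r$ stays in $\Pn$, so $T_r^{(\theta)}$ is smooth. For $r=0$, the map $\log$ is analytic on $\Pn$ and $\exp$ is entire, so $T_0^{(\theta)}$ is smooth as well. Hence $\Phi_{p,q}^{(\theta)}=T_p^{(\theta)}-T_q^{(\theta)}$ is smooth, and the Fréchet derivatives $D_2$ and $D_2^2$ used in the earlier propositions exist.

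\textbf{Condition (1).} Nonnegativity and the equivalence $\Phi_{p,q}^{(\theta)}(A,B)=0\Leftrightarrow A=B$ are exactly Proposition~\ref{prop:nonneg}, for every $p>q\ge0$, including $q=0$.

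\textbf{Condition (2).} Proposition~\ref{prop:first-derivative} gives $D_2T_r^{(\theta)}(A,A)[H]=\theta\Tr H$ for every $r\ge0$. Subtracting the formulas for $r=p$ and $r=q$ gives $D_2\Phi_{p,q}^{(\theta)}(A,A)[H]=0$.

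\textbf{Condition (3).} We first note that $D_2^2\Phi_{p,q}^{(\theta)}(A,A)[H,H]$ is unitarily invariant under $(A,H)\mapsto(UAU^*,UHU^*)$. It is therefore enough to compute in an eigenbasis of $A$, which is the setting of Proposition~\ref{prop:hessian-Tr}. Theorem~\ref{thm:hessian-positive} then gives strict positivity for every nonzero $H\in\Hn$.

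\textbf{Main obstacle.} The only step that needs care is the case $q=0$. There the Hessian of $T_0^{(\theta)}$ is obtained as a limit, and we must make sure it is the genuine second Fréchet derivative of $T_0^{(\theta)}$. Lemma~\ref{lem:C2-limit-r0} supplies exactly this convergence $\psi_r''(0)\to\psi_0''(0)$. Since $D_2^2T_0^{(\theta)}(A,A)[H,H]=\psi_0''(0)$ by definition, no further issue arises, and the three conditions together prove the theorem.
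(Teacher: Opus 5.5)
Your proposal is correct and matches the paper's proof, which is exactly the assembly of Proposition~\ref{prop:nonneg}, Proposition~\ref{prop:first-derivative}, and Theorem~\ref{thm:hessian-positive}, with Lemma~\ref{lem:C2-limit-r0} handling the endpoint $q=0$ inside Proposition~\ref{prop:hessian-Tr}. Your explicit smoothness check and the unitary-invariance reduction to diagonal $A$ are left implicit in the paper, and both are correct additions.
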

		\begin{corollary}\label{cor:symmetric-main}
			Let $p>q\ge0$. Then the symmetric trace gap
			\[
			\Phi_{p,q}(A,B)
			=
			\Tr\left[
			\left(\frac{A^p+B^p}{2}\right)^{1/p}
			-
			M_q(A,B)
			\right],
			\]
			where
			\[
			M_q(A,B)=
			\begin{cases}
				\left(\dfrac{A^q+B^q}{2}\right)^{1/q}, & q>0,\\[2ex]
				\exp\left(\dfrac{\log A+\log B}{2}\right), & q=0,
			\end{cases}
			\]
			is a quantum divergence on $\Pn$.
		\end{corollary}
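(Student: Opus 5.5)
The corollary is the special case \(\theta=1/2\) of Theorem~\ref{thm:main-divergence}, so the plan is to specialize and check that the notation agrees. With \(\theta=1/2\), the definition of the weighted power mean gives \(M_p^{(1/2)}(A,B)=\bigl(\tfrac12A^p+\tfrac12B^p\bigr)^{1/p}=\bigl(\tfrac{A^p+B^p}{2}\bigr)^{1/p}\). For \(q>0\) we get \(M_q^{(1/2)}(A,B)=\bigl(\tfrac{A^q+B^q}{2}\bigr)^{1/q}\). For \(q=0\) we get \(M_0^{(1/2)}(A,B)=\exp\bigl(\tfrac{\log A+\log B}{2}\bigr)\). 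These coincide with the two cases in the displayed definition of \(M_q\). Hence the functional in the corollary is exactly \(\Phi_{p,q}=\Phi_{p,q}^{(1/2)}\), matching the abbreviation fixed in the Preliminaries.

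Since \(1/2\in(0,1)\) and \(p>q\ge0\), Theorem~\ref{thm:main-divergence} applies directly. In more detail, the three Bhatia--Gaubert--Jain conditions follow from three earlier results. Nonnegativity, together with \(\Phi_{p,q}(A,B)=0\) if and only if \(A=B\), is Proposition~\ref{prop:nonneg}. The vanishing of \(D_2\Phi_{p,q}(A,A)[H]\) is Proposition~\ref{prop:first-derivative}. Positive definiteness of the diagonal Hessian is Theorem~\ref{thm:hessian-positive}. In the symmetric case the prefactor is \(4\theta(1-\theta)=1\), so the Hessian reduces to
\[
\sum_i \frac{p-q}{4a_i}|h_{ii}|^2+\sum_{i<j}\bigl(\Gamma_p(a_i,a_j)-\Gamma_q(a_i,a_j)\bigr)|h_{ij}|^2 .
\]

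The definition also requires \(\Phi_{p,q}\) to be smooth on \(\Pn\times\Pn\), so this must be checked. For \(r>0\), the map \((A,B)\mapsto\tfrac12(A^r+B^r)\) is smooth and takes values in \(\Pn\). Composing with \(X\mapsto X^{1/r}\), which is smooth on \(\Pn\), keeps it smooth. For \(r=0\), \(\log\) is smooth on \(\Pn\) and \(\exp\) is smooth on \(\Hn\), so \(M_0\) is smooth as well. Taking the trace preserves smoothness.

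There is no real obstacle here. The only care needed is matching the \(q=0\) convention to the logarithmic endpoint \(M_0^{(\theta)}\) and confirming that smoothness, which the main theorem takes for granted, holds for both branches.
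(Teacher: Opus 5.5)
Your proposal is correct and follows the paper's route: the paper states the corollary without a separate proof, as the case $\theta=1/2$ of Theorem~\ref{thm:main-divergence}, using the abbreviations $M_r=M_r^{(1/2)}$ and $\Phi_{p,q}=\Phi_{p,q}^{(1/2)}$ fixed in the Preliminaries. Your explicit smoothness check for both the $q>0$ and $q=0$ branches is a valid supplement that the paper leaves implicit.
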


		\section{Special cases, limits, and local form}\label{sec:limits-special-cases}
		
		We now record several special cases and limit processes contained in the
		two-parameter family
		\[
		\Phi_{p,q}^{(\theta)}(A,B)
		=
		T_p^{(\theta)}(A,B)-T_q^{(\theta)}(A,B),
		\qquad
		p>q\ge0.
		\]
		This section is useful for comparing the present family with earlier
		power-mean divergences and with standard quantum information quantities.
		In particular, the cases below recover the Lam--Le power-mean line, a
		constant multiple of the square-root Hellinger trace gap, the
		log-Euclidean Hellinger-type divergence, and the quantum Jensen--Shannon
		divergence through a diagonal parameter limit.
		
		\subsection{The arithmetic-versus-power-mean subfamily \(p=1\)}
		
		When \(p=1\), the upper mean becomes the weighted arithmetic mean:
		\[
		M_1^{(\theta)}(A,B)=(1-\theta)A+\theta B.
		\]
		Thus, for \(0\le q<1\),
		\[
		\Phi_{1,q}^{(\theta)}(A,B)
		=
		\Tr\left[
		(1-\theta)A+\theta B
		-
		M_q^{(\theta)}(A,B)
		\right].
		\]
		For \(0<q<1\), this gives
		\[
		\Phi_{1,q}^{(\theta)}(A,B)
		=
		\Tr\left[
		(1-\theta)A+\theta B
		-
		\big((1-\theta)A^q+\theta B^q\big)^{1/q}
		\right].
		\]
		In the symmetric case \(\theta=1/2\), this is the usual
		arithmetic-versus-\(q\)-power-mean trace gap studied in the power-mean
		divergence framework of Lam--Le~\cite{LamLe}.
		
		The endpoint \(q=0\) gives the log-Euclidean version:
		\[
		\Phi_{1,0}^{(\theta)}(A,B)
		=
		\Tr\left[
		(1-\theta)A+\theta B
		-
		\exp\big((1-\theta)\log A+\theta\log B\big)
		\right].
		\]
		In particular, in the symmetric case,
		\[
		\Phi_{1,0}^{(1/2)}(A,B)
		=
		\frac12
		\Tr\left[
		A+B
		-
		2\exp\left(\frac{\log A+\log B}{2}\right)
		\right].
		\]
		Thus \(\Phi_{1,0}^{(1/2)}\) is one half of the standard squared
		log-Euclidean Hellinger-type trace gap.
		\subsection{The square-root Hellinger special case}
		
		A particularly simple special case occurs when \(p=1\) and \(q=1/2\).
		The normalization below follows the trace-gap convention of this paper.
		In the symmetric case it is a constant multiple of the commonly used
		squared Hellinger-type trace expression; compare \cite{BGJ,PitrikVirosztek}.
		
		\begin{proposition}\label{prop:sqrt-hellinger}
			Let \(0<\theta<1\). Then
			\[
			\Phi_{1,1/2}^{(\theta)}(A,B)
			=
			\theta(1-\theta)\Tr\left(A^{1/2}-B^{1/2}\right)^2.
			\]
			In particular, in the symmetric case \(\theta=1/2\),
			\[
			\Phi_{1,1/2}^{(1/2)}(A,B)
			=
			\frac14\Tr\left(A^{1/2}-B^{1/2}\right)^2.
			\]
		\end{proposition}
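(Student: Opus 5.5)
The identity is a direct algebraic computation. Both means in the gap are explicit: at $p=1$ and $q=1/2$ no fractional power of a noncommuting sum survives, only a square. By the definition, $M_1^{(\theta)}(A,B)=(1-\theta)A+\theta B$ and
\[
M_{1/2}^{(\theta)}(A,B)=\bigl((1-\theta)A^{1/2}+\theta B^{1/2}\bigr)^2 .
\]
Write $X=A^{1/2}$ and $Y=B^{1/2}$, both in $\Pn$. Expanding the square without assuming commutativity gives
\[
\bigl((1-\theta)X+\theta Y\bigr)^2=(1-\theta)^2X^2+\theta(1-\theta)(XY+YX)+\theta^2Y^2 .
\]

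Subtracting this from $(1-\theta)X^2+\theta Y^2$ and using $(1-\theta)-(1-\theta)^2=\theta(1-\theta)=\theta-\theta^2$, the difference of the two means is exactly
\[
\theta(1-\theta)\bigl(X^2+Y^2-XY-YX\bigr)=\theta(1-\theta)(X-Y)^2 .
\]
This is already an operator identity, not merely a trace identity. Taking traces then gives $\Phi_{1,1/2}^{(\theta)}(A,B)=\theta(1-\theta)\Tr(A^{1/2}-B^{1/2})^2$. We could also use $\Tr XY=\Tr YX$ to write the cross term as $2\Tr XY$, but this is not needed.

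The symmetric case follows by substituting $\theta=1/2$, so that $\theta(1-\theta)=1/4$.

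There is no real obstacle. The only point needing care is to keep $XY$ and $YX$ separate during the expansion, since $A$ and $B$ need not commute. With them kept separate, the identity $X^2+Y^2-XY-YX=(X-Y)^2$ holds verbatim.
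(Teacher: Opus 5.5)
Your proposal is correct and follows the paper's proof: write $M_{1/2}^{(\theta)}(A,B)=\bigl((1-\theta)A^{1/2}+\theta B^{1/2}\bigr)^2$, expand while keeping the cross terms $A^{1/2}B^{1/2}$ and $B^{1/2}A^{1/2}$ separate, and subtract from the arithmetic mean to obtain $\theta(1-\theta)(A^{1/2}-B^{1/2})^2$. Your remark that this already holds as an operator identity before taking traces is accurate, and it slightly sharpens the paper's trace-level presentation.
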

		
		\begin{proof}
			We have
			\[
			M_{1/2}^{(\theta)}(A,B)
			=
			\left((1-\theta)A^{1/2}+\theta B^{1/2}\right)^2.
			\]
			Therefore
			\[
			\begin{aligned}
				\Phi_{1,1/2}^{(\theta)}(A,B)
				&=
				\Tr\left[
				(1-\theta)A+\theta B
				-
				\left((1-\theta)A^{1/2}+\theta B^{1/2}\right)^2
				\right] \\
				&=
				\theta(1-\theta)
				\Tr\left[
				A+B-A^{1/2}B^{1/2}-B^{1/2}A^{1/2}
				\right] \\
				&=
				\theta(1-\theta)
				\Tr\left(A^{1/2}-B^{1/2}\right)^2.
			\end{aligned}
			\]
		\end{proof}

		\subsection{Parameter limits}
		
		The preceding special cases are obtained by fixing the parameters. We now
		record two limiting procedures. The first differentiates the trace
		functional along the positive matrix power-mean scale and, at \(r=1\),
		recovers the weighted quantum Jensen--Shannon divergence \cite{BrietHarremoes2009}. The second
		describes the logarithmic endpoint obtained by letting \(p\downarrow0\)
		with \(q=0\).
		
		For \(r>0\), set
		\[
		C_r=(1-\theta)A^r+\theta B^r,
		\qquad
		M_r=C_r^{1/r}.
		\]
		Then
		\[
		T_r^{(\theta)}(A,B)=\Tr M_r.
		\]
		
		\begin{proposition}\label{prop:diagonal-parameter-derivative}
			Let \(r>0\). Then
			\[
			\lim_{\varepsilon\downarrow0}
			\frac{\Phi_{r+\varepsilon,r}^{(\theta)}(A,B)}{\varepsilon}
			=
			\frac{\partial}{\partial r}T_r^{(\theta)}(A,B),
			\]
			where
			\[
			\begin{aligned}
				\frac{\partial}{\partial r}T_r^{(\theta)}(A,B)
				&=
				\Tr\Bigg[
				M_r
				\Bigg(
				-\frac1{r^2}\log C_r
				+
				\frac1r
				D\log(C_r)
				\big[
				(1-\theta)A^r\log A+\theta B^r\log B
				\big]
				\Bigg)
				\Bigg].
			\end{aligned}
			\]
		\end{proposition}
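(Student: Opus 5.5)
The first identity holds because $\Phi_{r+\varepsilon,r}^{(\theta)}(A,B)=T_{r+\varepsilon}^{(\theta)}(A,B)-T_r^{(\theta)}(A,B)$. Hence the limit is the right derivative of $s\mapsto T_s^{(\theta)}(A,B)$ at $s=r$. It therefore suffices to show that $s\mapsto T_s^{(\theta)}(A,B)$ is differentiable on $(0,\infty)$ and to compute its derivative. I will write $M_s=\exp\bigl(\tfrac1s\log C_s\bigr)$ with $C_s=(1-\theta)A^s+\theta B^s$. Note that $C_s\in\Pn$ for all $s>0$.

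\textbf{Step 1 (smoothness).} The map $s\mapsto A^s=\exp(s\log A)$ is real-analytic, and so is $s\mapsto B^s$. Hence $s\mapsto C_s$ is real-analytic with values in $\Pn$, and
\[
\frac{d}{ds}C_s=(1-\theta)A^s\log A+\theta B^s\log B .
\]
The matrix logarithm is real-analytic (in particular Fréchet differentiable) on $\Pn$. Therefore $L(s)=\tfrac1s\log C_s$ is differentiable on $(0,\infty)$. By the chain rule,
\[
L'(s)=-\frac1{s^2}\log C_s+\frac1s\,D\log(C_s)\bigl[(1-\theta)A^s\log A+\theta B^s\log B\bigr].
\]

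\textbf{Step 2 (trace of the exponential).} I will use the identity already invoked in the proof of Proposition~\ref{prop:first-derivative}:
\[
\frac{d}{ds}\Tr e^{L(s)}=\Tr\bigl(e^{L(s)}L'(s)\bigr).
\]
This holds for any differentiable Hermitian path, even though $L(s)$ and $L'(s)$ need not commute. The justification is Duhamel's formula
\[
\frac{d}{ds}e^{L}=\int_0^1 e^{tL}L'e^{(1-t)L}\,dt,
\]
combined with cyclicity of the trace. Substituting $e^{L(s)}=M_s$ and the expression for $L'(s)$ at $s=r$ gives exactly the displayed formula for $\partial_r T_r^{(\theta)}(A,B)$.

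\textbf{Step 3 (conclusion).} Since $T_s^{(\theta)}$ is differentiable at $s=r$, its difference quotient converges to the derivative, so
\[
\lim_{\varepsilon\downarrow0}\varepsilon^{-1}\Phi_{r+\varepsilon,r}^{(\theta)}(A,B)=\partial_rT_r^{(\theta)}(A,B).
\]

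The only point needing care is Step 2. The trace derivative of the exponential must be applied without assuming commutativity, and here trace cyclicity is essential. Everything else is the chain rule for analytic functional calculus on the open cone $\Pn$.
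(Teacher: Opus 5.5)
Your proposal is correct and follows the paper's argument: write $T_s^{(\theta)}=\Tr\exp\bigl(\tfrac1s\log C_s\bigr)$, apply the trace formula $\tfrac{d}{ds}\Tr e^{L(s)}=\Tr\bigl(e^{L(s)}L'(s)\bigr)$, and differentiate $L(s)=\tfrac1s\log C_s$ by the chain rule. You also justify the trace formula through Duhamel's formula and spell out that the limit is the right derivative of $s\mapsto T_s^{(\theta)}$, two points the paper leaves implicit.
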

		
		\begin{proof}
			Write
			\[
			T_r^{(\theta)}(A,B)
			=
			\Tr\exp\left(\frac1r\log C_r\right).
			\]
			Let
			\[
			L_r=\frac1r\log C_r.
			\]
			By the trace derivative formula for the exponential,
			\[
			\frac{d}{dr}\Tr e^{L_r}
			=
			\Tr(e^{L_r}L_r').
			\]
			Moreover,
			\[
			C_r'
			=
			(1-\theta)A^r\log A+\theta B^r\log B,
			\]
			and hence
			\[
			L_r'
			=
			-\frac1{r^2}\log C_r
			+
			\frac1rD\log(C_r)[C_r'].
			\]
			Substituting \(e^{L_r}=M_r\) gives the desired formula.
		\end{proof}
		
		Taking \(r=1\) in the preceding derivative formula gives an entropy-type
		limit. Let
		\[
		S(X)=-\Tr X\log X
		\]
		denote the von Neumann entropy functional \cite{vonNeumann1955}.
		
		\begin{theorem}\label{thm:qjsd-limit}
			For \(A,B\in\Pn\),
			\[
			\lim_{\varepsilon\downarrow0}
			\frac{\Phi_{1+\varepsilon,1}^{(\theta)}(A,B)}{\varepsilon}
			=
			S((1-\theta)A+\theta B)
			-
			(1-\theta)S(A)
			-
			\theta S(B).
			\]
			In particular, when \(A\) and \(B\) are density matrices, the
			right-hand side is the weighted quantum Jensen--Shannon divergence
			\cite{LambertiQJSD}.
		\end{theorem}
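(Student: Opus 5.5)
Since $\Phi_{1+\varepsilon,1}^{(\theta)}(A,B)=T_{1+\varepsilon}^{(\theta)}(A,B)-T_1^{(\theta)}(A,B)$, the limit is the right derivative $\partial_r T_r^{(\theta)}(A,B)$ at $r=1$. We would justify this with Proposition~\ref{prop:diagonal-parameter-derivative}: the map $r\mapsto T_r^{(\theta)}(A,B)=\Tr\exp(\tfrac1r\log C_r)$ is real-analytic near $r=1$, because $C_r$ is analytic in $r$ and positive definite. So the task is to evaluate the formula of Proposition~\ref{prop:diagonal-parameter-derivative} at $r=1$.

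At $r=1$ we have $C_1=M_1=(1-\theta)A+\theta B=:C$ and $C_1'=(1-\theta)A\log A+\theta B\log B$. The formula gives
\[
\partial_rT_r^{(\theta)}\big|_{r=1}
=
-\Tr(C\log C)
+\Tr\big(C\,D\log(C)[(1-\theta)A\log A+\theta B\log B]\big).
\]
The first term is $S(C)$.

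For the second term we use the identity $\Tr(C\,D\log(C)[K])=\Tr K$ for every Hermitian $K$. This was established in the proof of Proposition~\ref{prop:first-derivative} by differentiating $\Tr(C+sK)=\Tr\exp(\log(C+sK))$. One can also see it from the Daleckii--Krein formula: in an eigenbasis of $C$, the diagonal entries of $D\log(C)[K]$ are $k_{ii}/c_i$, so $\Tr(C\,D\log(C)[K])=\sum_i c_i\cdot k_{ii}/c_i=\Tr K$. With $K=C_1'$ this gives
\[
\Tr C_1'=(1-\theta)\Tr A\log A+\theta\Tr B\log B=-(1-\theta)S(A)-\theta S(B).
\]
Adding the two terms yields $S((1-\theta)A+\theta B)-(1-\theta)S(A)-\theta S(B)$, which is the claimed limit. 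For density matrices this is by definition the weighted quantum Jensen--Shannon divergence.

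The only delicate point is differentiability of $r\mapsto T_r^{(\theta)}$ at $r=1$, needed so that the one-sided limit equals the derivative. This follows from smoothness of $\log$ and $\exp$ on $\Pn$ and $\Hn$ respectively, together with smoothness of $r\mapsto A^r=\exp(r\log A)$. The rest is the trace identity above, so we expect no real obstacle.
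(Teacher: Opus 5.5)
Your proposal is correct and matches the paper's proof: evaluate Proposition~\ref{prop:diagonal-parameter-derivative} at $r=1$, read off $-\Tr C\log C=S(C)$, and use $\Tr(C\,D\log(C)[E])=\Tr E$ (from differentiating $\Tr\exp(\log(C+tE))$) to reduce the second term to $(1-\theta)\Tr A\log A+\theta\Tr B\log B$. Your Daleckii--Krein check of that identity and your analyticity argument for $r\mapsto T_r^{(\theta)}$ near $r=1$ fill in details the paper leaves implicit.
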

		
		\begin{proof}
			Put
			\[
			C=(1-\theta)A+\theta B.
			\]
			At \(r=1\), Proposition~\ref{prop:diagonal-parameter-derivative} gives
			\[
			\left.\frac{\partial}{\partial r}T_r^{(\theta)}(A,B)\right|_{r=1}
			=
			-\Tr C\log C
			+
			\Tr\left[
			C\,D\log(C)
			\big[
			(1-\theta)A\log A+\theta B\log B
			\big]
			\right].
			\]
			For any Hermitian \(E\),
			\[
			\Tr\big(C\,D\log(C)[E]\big)=\Tr E.
			\]
			Indeed, this follows by differentiating
			\[
			\Tr(C+tE)=\Tr\exp(\log(C+tE))
			\]
			at \(t=0\). Hence
			\[
			\left.\frac{\partial}{\partial r}T_r^{(\theta)}(A,B)\right|_{r=1}
			=
			-\Tr C\log C
			+
			(1-\theta)\Tr A\log A
			+
			\theta\Tr B\log B.
			\]
			Using \(S(X)=-\Tr X\log X\), this is
			\[
			S(C)-(1-\theta)S(A)-\theta S(B),
			\]
			as claimed.
		\end{proof}
		
		We next turn to the logarithmic endpoint \(r=0\), which is not covered by
		the preceding positive-parameter derivative formula.
		
		\begin{theorem}\label{thm:local-log-euclidean}
			For \(A,B\in\Pn\),
			\[
			\lim_{\varepsilon\downarrow0}
			\frac{\Phi_{\varepsilon,0}^{(\theta)}(A,B)}{\varepsilon}
			=
			\frac{\theta(1-\theta)}2
			\Tr\left[
			M_0^{(\theta)}(A,B)(\log A-\log B)^2
			\right].
			\]
			In particular, this limit is nonnegative and vanishes if and only if
			\(A=B\).
		\end{theorem}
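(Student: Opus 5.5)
Since $\Phi_{\varepsilon,0}^{(\theta)}(A,B)=T_\varepsilon^{(\theta)}(A,B)-T_0^{(\theta)}(A,B)$, the limit is the one-sided derivative at $r=0$ of $r\mapsto T_r^{(\theta)}(A,B)$, extended by $T_0^{(\theta)}$ at $r=0$. I will reuse the expansion technique of Lemma~\ref{lem:C2-limit-r0}, with $s$ frozen but carried one order further in $r$. Put $\mathcal L_A=\log A$, $\mathcal L_B=\log B$ and $L=(1-\theta)\mathcal L_A+\theta\mathcal L_B$, so that $M_0^{(\theta)}(A,B)=e^{L}$. Expanding $A^r=\exp(r\mathcal L_A)$ and $B^r$ to second order gives
\[
C_\varepsilon=(1-\theta)A^\varepsilon+\theta B^\varepsilon
= I+\varepsilon L+\tfrac{\varepsilon^2}{2}Q+O(\varepsilon^3),
\qquad
Q=(1-\theta)\mathcal L_A^2+\theta\mathcal L_B^2 .
\]
Applying the power series of $\log$ at $I$ then yields
\[
\tfrac1\varepsilon\log C_\varepsilon
= L+\tfrac{\varepsilon}{2}\bigl(Q-L^2\bigr)+O(\varepsilon^2).
\]

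Next I will compute $Q-L^2$ exactly. Expanding,
\[
Q-L^2=\theta(1-\theta)\bigl(\mathcal L_A^2+\mathcal L_B^2-\mathcal L_A\mathcal L_B-\mathcal L_B\mathcal L_A\bigr)=\theta(1-\theta)(\mathcal L_A-\mathcal L_B)^2 ,
\]
using $(1-\theta)-(1-\theta)^2=\theta-\theta^2=\theta(1-\theta)$. Hence
\[
M_\varepsilon^{(\theta)}(A,B)=\exp\Bigl(L+\varepsilon K+O(\varepsilon^2)\Bigr),
\qquad K=\tfrac{\theta(1-\theta)}2(\log A-\log B)^2 .
\]
The trace derivative formula $\frac{d}{dt}\Tr e^{L+tK}\big|_{t=0}=\Tr(e^{L}K)$, already used in Proposition~\ref{prop:first-derivative}, combined with the $C^1$ dependence of $\Tr\exp$ to absorb the $O(\varepsilon^2)$ remainder, gives
\[
T_\varepsilon^{(\theta)}(A,B)-T_0^{(\theta)}(A,B)=\varepsilon\Tr(e^LK)+O(\varepsilon^2).
\]
Dividing by $\varepsilon$ and letting $\varepsilon\downarrow0$ produces the stated formula.

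The main obstacle is making the remainders uniform. The matrices $\mathcal L_A$ and $\mathcal L_B$ are fixed and bounded, so the exponential and logarithm series converge in operator norm for small $\varepsilon$. This makes every $O(\cdot)$ above a genuine norm bound, and no noncommutativity issue arises, since $L$, $Q$ and $K$ are fixed matrices.

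For the final claim, write $W=\log A-\log B$. Then
\[
\Tr(e^{L}W^2)=\Tr\bigl((e^{L/2}W)^*(e^{L/2}W)\bigr)=\|e^{L/2}W\|_2^2\ge0 ,
\]
with equality iff $e^{L/2}W=0$. Since $e^{L/2}$ is invertible, this happens iff $W=0$, i.e. $\log A=\log B$, which by injectivity of $\log$ on $\Pn$ means $A=B$.
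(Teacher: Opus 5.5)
Your proposal is correct and follows essentially the same route as the paper: the second-order expansion of $(1-\theta)e^{\varepsilon\log A}+\theta e^{\varepsilon\log B}$, the logarithm series at $I$, the identity $Q-L^2=\theta(1-\theta)(\log A-\log B)^2$, and the first-order trace expansion of $\Tr\exp$. The equality case also matches, using $\Tr(e^{L}W^2)=\|e^{L/2}W\|_2^2$ together with invertibility of $e^{L/2}$ and injectivity of $\log$ on $\Pn$.
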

		
		\begin{proof}
			Set
			\[
			X=\log A,\qquad Y=\log B,
			\]
			and
			\[
			L=(1-\theta)X+\theta Y.
			\]
			Then
			\[
			M_0^{(\theta)}(A,B)=e^L.
			\]
			For \(\varepsilon>0\),
			\[
			(1-\theta)A^\varepsilon+\theta B^\varepsilon
			=
			(1-\theta)e^{\varepsilon X}+\theta e^{\varepsilon Y}.
			\]
			All \(O(\varepsilon^k)\) terms below are in operator norm. Since the
			logarithm is analytic in a neighbourhood of the identity and the
			exponential is analytic everywhere, the following expansions may be used
			inside the trace by finite-dimensional analytic functional calculus.
			Expanding at \(\varepsilon=0\), we get
			\[
			(1-\theta)e^{\varepsilon X}+\theta e^{\varepsilon Y}
			=
			I+\varepsilon L+\frac{\varepsilon^2}{2}
			\big((1-\theta)X^2+\theta Y^2\big)
			+
			O(\varepsilon^3).
			\]
			Therefore,
			\[
			\log\big((1-\theta)A^\varepsilon+\theta B^\varepsilon\big)
			=
			\varepsilon L
			+
			\frac{\varepsilon^2}{2}
			\left[
			(1-\theta)X^2+\theta Y^2-L^2
			\right]
			+
			O(\varepsilon^3).
			\]
			Dividing by \(\varepsilon\), we obtain
			\[
			\frac1\varepsilon
			\log\big((1-\theta)A^\varepsilon+\theta B^\varepsilon\big)
			=
			L
			+
			\frac{\varepsilon}{2}
			\left[
			(1-\theta)X^2+\theta Y^2-L^2
			\right]
			+
			O(\varepsilon^2).
			\]
			Since
			\[
			(1-\theta)X^2+\theta Y^2-L^2
			=
			\theta(1-\theta)(X-Y)^2,
			\]
			where \((X-Y)^2=X^2-XY-YX+Y^2\), no commutativity of \(X\) and \(Y\) is
			being used. Hence
			\[
			M_\varepsilon^{(\theta)}(A,B)
			=
			\exp\left[
			L+\frac{\varepsilon}{2}\theta(1-\theta)(X-Y)^2
			+
			O(\varepsilon^2)
			\right].
			\]
			Taking traces and differentiating the trace exponential gives
			\[
			T_\varepsilon^{(\theta)}(A,B)
			=
			T_0^{(\theta)}(A,B)
			+
			\frac{\varepsilon}{2}\theta(1-\theta)
			\Tr\left[e^L(X-Y)^2\right]
			+
			O(\varepsilon^2).
			\]
			Hence
			\[
			\lim_{\varepsilon\downarrow0}
			\frac{T_\varepsilon^{(\theta)}(A,B)-T_0^{(\theta)}(A,B)}
			{\varepsilon}
			=
			\frac{\theta(1-\theta)}2
			\Tr\left[
			M_0^{(\theta)}(A,B)(\log A-\log B)^2
			\right].
			\]
			This is exactly the asserted limit.
			
			To prove the equality condition, set
			\[
			P=M_0^{(\theta)}(A,B),
			\qquad
			Z=\log A-\log B.
			\]
			Then \(P>0\), \(Z=Z^*\), and
			\[
			\Tr(PZ^2)
			=
			\Tr(ZPZ)
			=
			\Tr\big((P^{1/2}Z)^*(P^{1/2}Z)\big)\ge0.
			\]
			If the limit is zero, then
			\[
			\Tr\big((P^{1/2}Z)^*(P^{1/2}Z)\big)=0,
			\]
			and hence
			\[
			P^{1/2}Z=0.
			\]
			Since \(P\) is invertible, \(Z=0\). Thus \(\log A=\log B\), and hence
			\(A=B\). The converse is immediate.
		\end{proof}

		\subsection{Two structural consequences}
		
		We close this section with two structural consequences of the divergence
		proof: monotonicity with respect to the parameters and the explicit local
		quadratic form induced on the diagonal.
		
		The first consequence follows directly from the nonnegativity and equality
		condition in Proposition~\ref{prop:nonneg}.
		
		\begin{corollary}\label{cor:parameter-monotonicity}
			Let \(0<\theta<1\) and let \(A,B\in\Pn\). The map
			\[
			r\longmapsto T_r^{(\theta)}(A,B)
			\]
			is increasing on \([0,\infty)\). Moreover, if \(A\ne B\), then it is
			strictly increasing.
			
			Consequently, if
			\[
			p_1>p_2>q\ge0,
			\]
			then
			\[
			\Phi_{p_1,q}^{(\theta)}(A,B)
			\ge
			\Phi_{p_2,q}^{(\theta)}(A,B),
			\]
			and if
			\[
			p>q_1>q_2\ge0,
			\]
			then
			\[
			\Phi_{p,q_2}^{(\theta)}(A,B)
			\ge
			\Phi_{p,q_1}^{(\theta)}(A,B).
			\]
			In both inequalities, equality holds if and only if \(A=B\).
		\end{corollary}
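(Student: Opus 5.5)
The plan is to read everything off Proposition~\ref{prop:nonneg}, using the identity
\[
T_s^{(\theta)}(A,B)-T_r^{(\theta)}(A,B)=\Phi_{s,r}^{(\theta)}(A,B),\qquad s>r\ge0 .
\]
For monotonicity of \(r\mapsto T_r^{(\theta)}(A,B)\) on \([0,\infty)\), take \(s>r\ge0\). Proposition~\ref{prop:nonneg} applies to the pair \((s,r)\), including the logarithmic endpoint \(r=0\) handled in its proof. It gives \(T_s^{(\theta)}(A,B)-T_r^{(\theta)}(A,B)\ge0\), so the map is increasing.

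For strict monotonicity when \(A\ne B\), use the equality clause of the same proposition. If \(T_s^{(\theta)}(A,B)=T_r^{(\theta)}(A,B)\) for some \(s>r\ge0\), then \(\Phi_{s,r}^{(\theta)}(A,B)=0\), which forces \(A=B\). Hence \(A\ne B\) makes every such difference strictly positive.

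The two displayed inequalities are then telescoping identities. For \(p_1>p_2>q\ge0\),
\[
\Phi_{p_1,q}^{(\theta)}(A,B)-\Phi_{p_2,q}^{(\theta)}(A,B)=T_{p_1}^{(\theta)}(A,B)-T_{p_2}^{(\theta)}(A,B)=\Phi_{p_1,p_2}^{(\theta)}(A,B)\ge0 .
\]
For \(p>q_1>q_2\ge0\),
\[
\Phi_{p,q_2}^{(\theta)}(A,B)-\Phi_{p,q_1}^{(\theta)}(A,B)=T_{q_1}^{(\theta)}(A,B)-T_{q_2}^{(\theta)}(A,B)=\Phi_{q_1,q_2}^{(\theta)}(A,B)\ge0 .
\]
In both cases the difference is itself a divergence \(\Phi_{s,r}^{(\theta)}\) with \(s>r\ge0\). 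By Proposition~\ref{prop:nonneg} it vanishes exactly when \(A=B\), which gives the stated equality conditions; the converse direction is trivial because all the means equal \(A\) when \(A=B\).

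There is no real obstacle here. The only point needing care is the case where the lower index is \(0\) (\(q=0\) or \(q_2=0\)). There one must invoke the strict equality case of Proposition~\ref{prop:nonneg} at the logarithmic endpoint, which was proved there via an intermediate \(r\in(0,p)\), rather than only the case of positive parameters.
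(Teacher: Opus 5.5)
Your proposal is correct and follows the paper's proof essentially verbatim. Both arguments reduce monotonicity and its strictness to Proposition~\ref{prop:nonneg} via $T_s^{(\theta)}-T_r^{(\theta)}=\Phi_{s,r}^{(\theta)}$, and both obtain the two inequalities and their equality cases from the telescoping identities $\Phi_{p_1,q}^{(\theta)}-\Phi_{p_2,q}^{(\theta)}=\Phi_{p_1,p_2}^{(\theta)}$ and $\Phi_{p,q_2}^{(\theta)}-\Phi_{p,q_1}^{(\theta)}=\Phi_{q_1,q_2}^{(\theta)}$, with Proposition~\ref{prop:nonneg} (including its equality clause at the logarithmic endpoint $r=0$) covering every case.
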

		
		\begin{proof}
			For \(s>r\ge0\), we have
			\[
			T_s^{(\theta)}(A,B)-T_r^{(\theta)}(A,B)
			=
			\Phi_{s,r}^{(\theta)}(A,B).
			\]
			Proposition~\ref{prop:nonneg} gives
			\[
			\Phi_{s,r}^{(\theta)}(A,B)\ge0,
			\]
			with equality if and only if \(A=B\). This proves the monotonicity and
			the strictness statement. The two displayed inequalities follow from
			\[
			\Phi_{p_1,q}^{(\theta)}-\Phi_{p_2,q}^{(\theta)}
			=
			\Phi_{p_1,p_2}^{(\theta)}
			\]
			and
			\[
			\Phi_{p,q_2}^{(\theta)}-\Phi_{p,q_1}^{(\theta)}
			=
			\Phi_{q_1,q_2}^{(\theta)}.
			\]
		\end{proof}
		
		The second consequence records the local quadratic form induced on the
		diagonal. Although it follows from the Hessian computation above, we state
		it separately for later use.
		
		\begin{proposition}\label{prop:local-metric}
			Let \(0<\theta<1\) and \(p>q\ge0\). Define
			\[
			g_{A,\theta}^{p,q}(H,H)
			=
			D_2^2\Phi_{p,q}^{(\theta)}(A,A)[H,H].
			\]
			If \(A=\diag(a_1,\ldots,a_n)\) and \(H=(h_{ij})\in\Hn\), then
			\[
			g_{A,\theta}^{p,q}(H,H)
			=
			4\theta(1-\theta)
			\left[
			\sum_i\frac{p-q}{4a_i}|h_{ii}|^2
			+
			\sum_{i<j}
			\big(\Gamma_p(a_i,a_j)-\Gamma_q(a_i,a_j)\big)|h_{ij}|^2
			\right],
			\]
			where \(\Gamma_0\) is understood by the convention
			\[
			\Gamma_0(a,b)
			=
			\frac{\log(b/a)}{2(a-b)}
			\quad (a\ne b),
			\qquad
			\Gamma_0(a,a)=-\frac1{2a}.
			\]
			In particular, \(g_{A,\theta}^{p,q}\) is positive definite on \(\Hn\).
		\end{proposition}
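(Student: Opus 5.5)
The formula is a direct consequence of Proposition~\ref{prop:hessian-Tr} applied twice, and the positivity is exactly Theorem~\ref{thm:hessian-positive}. My plan is to write $\Phi_{p,q}^{(\theta)}=T_p^{(\theta)}-T_q^{(\theta)}$ and use linearity of the second Fr\'echet derivative in the second variable:
\[
g_{A,\theta}^{p,q}(H,H)=D_2^2T_p^{(\theta)}(A,A)[H,H]-D_2^2T_q^{(\theta)}(A,A)[H,H].
\]
Both functionals are smooth near the diagonal. For $r>0$ this holds by analytic functional calculus, and for $r=0$ through the exponential and logarithm. So the subtraction is legitimate.

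Next I would insert the formula of Proposition~\ref{prop:hessian-Tr} for $r=p$ and for $r=q$. On the diagonal terms the coefficients subtract to
\[
\frac{p-1}{4a_i}-\frac{q-1}{4a_i}=\frac{p-q}{4a_i}.
\]
For $q=0$, the proposition's convention gives the diagonal coefficient $-1/(4a_i)$, so the difference is still $(p-q)/(4a_i)$ with $q=0$. The off-diagonal coefficients subtract to $\Gamma_p(a_i,a_j)-\Gamma_q(a_i,a_j)$. When $q=0$, the stated convention for $\Gamma_0$ is precisely the one under which Proposition~\ref{prop:hessian-Tr} was proved, via Lemma~\ref{lem:C2-limit-r0}. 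This yields the displayed formula.

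For positive definiteness I would simply invoke Theorem~\ref{thm:hessian-positive}. Its proof shows that every coefficient is strictly positive: $(p-q)/(4a_i)>0$, and $\Gamma_p>\Gamma_q$ by the strict convexity of $r\mapsto x^r+x^{1-r}$. So $g_{A,\theta}^{p,q}(H,H)>0$ whenever $H\ne0$.

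I expect no real obstacle. The only points needing care are the bookkeeping of the $r=0$ conventions, so that they agree with those in Proposition~\ref{prop:hessian-Tr}, and the remark that the expression is basis-dependent only through the diagonalization of $A$. By unitary invariance of the trace, $g_{A,\theta}^{p,q}$ is well defined, so no work is needed there.
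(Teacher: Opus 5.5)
Your proposal is correct and takes the same route as the paper. The paper's proof is the one-line observation that the formula is Proposition~\ref{prop:hessian-Tr} applied to $T_p^{(\theta)}-T_q^{(\theta)}$: the diagonal coefficients subtract to $(p-q)/(4a_i)$ and the off-diagonal ones to $\Gamma_p-\Gamma_q$, with the $r=0$ conventions exactly as you handle them. Positive definiteness is Theorem~\ref{thm:hessian-positive}; your extra remarks on smoothness (which justify linearity of $D_2^2$) and on basis independence are harmless additions.
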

		
		\begin{proof}
			This is Proposition~\ref{prop:hessian-Tr} applied to
			\(T_p^{(\theta)}-T_q^{(\theta)}\), together with
			Theorem~\ref{thm:hessian-positive}.
		\end{proof}
		\section{Joint convexity and data processing}\label{sec:convexity-dpi}
		
		We begin with the convexity properties of the trace functional
		\(T_r^{(\theta)}\). For \(r>0\), the weights can be absorbed by the
		invertible linear change of variables
		\[
		X=(1-\theta)^{1/r}A,\qquad Y=\theta^{1/r}B,
		\]
		so that
		\[
		T_r^{(\theta)}(A,B)
		=
		\Tr\big(X^r+Y^r\big)^{1/r}.
		\]
		Hence the Minkowski-type trace theorem of Carlen and Lieb
		\cite{CarlenLiebII} implies that \(T_r^{(\theta)}\) is jointly concave for
		\(0<r\le1\) and jointly convex for \(1\le r\le2\). At the logarithmic
		endpoint,
		\[
		T_0^{(\theta)}(A,B)
		=
		\lim_{r\downarrow0}T_r^{(\theta)}(A,B)
		=
		\Tr\exp\big((1-\theta)\log A+\theta\log B\big).
		\]
		Since a finite pointwise limit of jointly concave functions is jointly
		concave, \(T_0^{(\theta)}\) is jointly concave.
		\begin{theorem}\label{thm:joint-convexity}
			Let $0<\theta<1$ and
			\[
			0\le q\le1\le p\le2,
			\qquad
			p>q.
			\]
			Then $\Phi_{p,q}^{(\theta)}$ is jointly convex on $\Pn\times\Pn$.
		\end{theorem}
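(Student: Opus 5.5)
The plan is to split $\Phi_{p,q}^{(\theta)}=T_p^{(\theta)}-T_q^{(\theta)}$ and treat the two trace functionals separately, using the convexity and concavity facts recorded just before the statement. A jointly convex function minus a jointly concave function is jointly convex, since $-T_q^{(\theta)}$ is then jointly convex and sums of jointly convex functions are jointly convex. So it suffices to show two things: $T_p^{(\theta)}$ is jointly convex on $\Pn\times\Pn$ for $1\le p\le 2$, and $T_q^{(\theta)}$ is jointly concave for $0\le q\le 1$.

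For the upper term, $p\ge1>0$, so we use the change of variables $X=(1-\theta)^{1/p}A$ and $Y=\theta^{1/p}B$. This gives $T_p^{(\theta)}(A,B)=\Tr(X^p+Y^p)^{1/p}$. The map $(A,B)\mapsto(X,Y)$ is linear, so joint convexity transfers from the Carlen--Lieb theorem \cite{CarlenLiebII} for $1\le p\le2$. We need that theorem's convexity of $(X,Y)\mapsto\Tr(X^p+Y^p)^{1/p}$ in this range. We would check this is the case $q=1$ of their $\Upsilon_{p,q}$ together with a two-block diagonal reduction: take $Z=X\oplus Y$ and use the partial-trace/Minkowski form. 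At $p=1$ the functional is linear, hence trivially both convex and concave.

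For the lower term with $0<q\le1$, the same rescaling plus the concavity half of Carlen--Lieb gives joint concavity. For $q=0$ we use the pointwise limit $T_0^{(\theta)}=\lim_{r\downarrow0}T_r^{(\theta)}$, which Lemma~\ref{lem:C2-limit-r0} and the proof of Proposition~\ref{prop:nonneg} justify. Concavity inequalities $T_r(\lambda Z_1+(1-\lambda)Z_2)\ge\lambda T_r(Z_1)+(1-\lambda)T_r(Z_2)$ pass to the limit. As a cross-check, the concavity of $(A,B)\mapsto\Tr\exp((1-\theta)\log A+\theta\log B)$ also follows from Lieb's concavity via the Golden--Thompson-type variational formula, but the limit argument is simpler.

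The main obstacle is the precise invocation of Carlen--Lieb. Their theorem is stated for $\Upsilon_{p,q}(X)=\Tr[(X^*\!K^p X)^{q/p}]$ or for partial traces of $(\cdot)^p$. We must confirm that the two-matrix sum form with exponent $1/p$ is covered, for convexity precisely when $1\le p\le2$ and for concavity when $0<p\le1$. We also need the domain $\Pn\times\Pn$ to be convex, which is clear, and continuity to handle the boundary exponent cases. Once this identification is made, the theorem follows by adding the two pieces.
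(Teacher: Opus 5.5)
Your proposal is correct and follows essentially the same route as the paper. Both use the rescaling $X=(1-\theta)^{1/r}A$, $Y=\theta^{1/r}B$ to reduce $T_r^{(\theta)}$ to the Carlen--Lieb functional $\Tr(X^r+Y^r)^{1/r}$, which is jointly convex for $1\le r\le2$ and jointly concave for $0<r\le1$; both obtain concavity of $T_0^{(\theta)}$ as a pointwise limit and then subtract.

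The identification you flag as the main obstacle is immediate. $\Tr(X^r+Y^r)^{1/r}$ is literally the two-matrix case of the Carlen--Lieb Minkowski-type functional $\Tr\bigl[(\sum_j A_j^r)^{1/r}\bigr]$, so there is no need to pass through $\Upsilon_{p,q}$.
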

		
		\begin{proof}
			Let $0\le\lambda\le1$ and put
			\[
			A_\lambda=\lambda A_1+(1-\lambda)A_2,
			\qquad
			B_\lambda=\lambda B_1+(1-\lambda)B_2.
			\]
			Since $1\le p\le2$, $T_p^{(\theta)}$ is jointly convex. Hence
			\[
			T_p^{(\theta)}(A_\lambda,B_\lambda)
			\le
			\lambda T_p^{(\theta)}(A_1,B_1)
			+
			(1-\lambda)T_p^{(\theta)}(A_2,B_2).
			\]
			Since $0\le q\le1$, $T_q^{(\theta)}$ is jointly concave. Hence
			\[
			T_q^{(\theta)}(A_\lambda,B_\lambda)
			\ge
			\lambda T_q^{(\theta)}(A_1,B_1)
			+
			(1-\lambda)T_q^{(\theta)}(A_2,B_2).
			\]
			Subtracting the second inequality from the first gives
			\[
			\Phi_{p,q}^{(\theta)}(A_\lambda,B_\lambda)
			\le
			\lambda\Phi_{p,q}^{(\theta)}(A_1,B_1)
			+
			(1-\lambda)\Phi_{p,q}^{(\theta)}(A_2,B_2).
			\]
		\end{proof}
		We next prove the data processing inequality. The argument follows the
		standard convexity--dilation--partial-trace route. We use the finite-dimensional Stinespring representation for quantum
		channels: every completely positive trace-preserving map can be written in
		the form
		\[
		\Lambda(X)=\operatorname{Tr}_{\mathcal E}(VXV^*),
		\]
		where \(V\) is an isometry into a system-environment space and
		\(\operatorname{Tr}_{\mathcal E}\) denotes the partial trace over the
		environment space \(\mathcal E\) \cite[Corollary~2.27]{Watrous}. Even when \(X>0\), the operator \(VXV^*\) is typically
		singular on the larger space, since it vanishes on the orthogonal
		complement of the range of \(V\). Thus the logarithmic endpoint \(r=0\)
		forces us to specify a boundary convention before applying the dilation
		argument.
		
		\begin{lemma}\label{lem:regularized-boundary}
			Let \(X,Y\ge0\). For \(r>0\), define
			\[
			T_r^{(\theta)}(X,Y)
			=
			\Tr\big((1-\theta)X^r+\theta Y^r\big)^{1/r}
			\]
			by continuous functional calculus. For \(r=0\), define
			\[
			T_0^{(\theta)}(X,Y)
			=
			\lim_{\varepsilon\downarrow0}
			\Tr\exp\big((1-\theta)\log(X+\varepsilon I)
			+
			\theta\log(Y+\varepsilon I)\big).
			\]
			Then the limit exists. With these conventions, \(T_r^{(\theta)}\), and
			hence \(\Phi_{p,q}^{(\theta)}=T_p^{(\theta)}-T_q^{(\theta)}\), is
			compatible with unitary conjugation, direct sums, and tensoring by a
			normalized identity whenever boundary values occur.
		\end{lemma}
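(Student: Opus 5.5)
The plan is to prove existence of the limit by a monotonicity argument in \(\varepsilon\), and then to check each compatibility property directly from the functional calculus. For \(r>0\) nothing needs to be shown for existence: \(X^r\) and \(Y^r\) are defined for \(X,Y\ge0\) by continuous functional calculus, and \((1-\theta)X^r+\theta Y^r\ge0\), so its \(1/r\)-th power is defined as well.

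\textbf{Existence of the limit at \(r=0\).} Set
\[
L_\varepsilon=(1-\theta)\log(X+\varepsilon I)+\theta\log(Y+\varepsilon I),\qquad \varepsilon>0.
\]
\begin{itemize}
\item If \(0<\varepsilon_1<\varepsilon_2\), then \(X+\varepsilon_1 I\le X+\varepsilon_2 I\), and operator monotonicity of \(\log\) gives \(\log(X+\varepsilon_1 I)\le\log(X+\varepsilon_2 I)\). The same holds for \(Y\), so \(L_{\varepsilon_1}\le L_{\varepsilon_2}\).
\item Weyl monotonicity of eigenvalues \cite[p.~63]{BhatiaMA} gives \(\lambda_j(L_{\varepsilon_1})\le\lambda_j(L_{\varepsilon_2})\) for every \(j\). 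This is the same device used in Proposition~\ref{prop:nonneg}.
\item Since \(t\mapsto e^t\) is increasing, \(\varepsilon\mapsto\Tr e^{L_\varepsilon}=\sum_j e^{\lambda_j(L_\varepsilon)}\) is nondecreasing in \(\varepsilon\).
\item It is also bounded below by \(0\).
\end{itemize}
Hence the limit as \(\varepsilon\downarrow0\) exists and is finite and nonnegative. When \(X,Y\in\Pn\), continuity of \(\log\) and \(\exp\) shows that the limit equals the original \(T_0^{(\theta)}(X,Y)\), so the convention is consistent. Trace monotonicity of \(\exp\) under the Loewner order is the only nontrivial ingredient, and I expect this step to be the main (though mild) obstacle.

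\textbf{Unitary conjugation and direct sums.} Unitary invariance follows from \(UXU^*+\varepsilon I=U(X+\varepsilon I)U^*\) together with \(f(UZU^*)=Uf(Z)U^*\) for \(f=\log\) and \(f=\exp\). Cyclicity of the trace then removes \(U\), and the same argument works for \(r>0\) with \(f=t^r\) and \(f=t^{1/r}\). For direct sums, \((X_1\oplus X_2)+\varepsilon I=(X_1+\varepsilon I)\oplus(X_2+\varepsilon I)\), and all functional calculi act blockwise. The trace is additive over blocks and limits add, so \(T_r^{(\theta)}\) is additive under direct sums for every \(r\ge0\).

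\textbf{Tensoring by a normalized identity.} Let \(\tau=I_k/k\).
\begin{itemize}
\item \emph{Case \(r>0\).} We have \((X\otimes\tau)^r=k^{-r}X^r\otimes I_k\). Hence
\[
\bigl((1-\theta)(X\otimes\tau)^r+\theta(Y\otimes\tau)^r\bigr)^{1/r}=k^{-1}M\otimes I_k,
\]
where \(M\) is the corresponding mean of \(X\) and \(Y\). Its trace is \(\Tr M\).
\item \emph{Case \(r=0\).} Write \(X\otimes\tau+\varepsilon I=(X+k\varepsilon I)\otimes\tau\). Then
\[
\log\bigl((X+k\varepsilon I)\otimes\tau\bigr)=\log(X+k\varepsilon I)\otimes I_k-(\log k)\,I,
\]
and the same holds for \(Y\). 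Since the weights sum to \(1\), the regularized exponent is \(L_{k\varepsilon}\otimes I_k-(\log k)I\). Its exponential is \(k^{-1}e^{L_{k\varepsilon}}\otimes I_k\), with trace \(\Tr e^{L_{k\varepsilon}}\).
\end{itemize}
Letting \(\varepsilon\downarrow0\) (equivalently \(k\varepsilon\downarrow0\)) gives \(T_0^{(\theta)}(X\otimes\tau,Y\otimes\tau)=T_0^{(\theta)}(X,Y)\).

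All three compatibilities pass to \(\Phi_{p,q}^{(\theta)}=T_p^{(\theta)}-T_q^{(\theta)}\) by linearity.
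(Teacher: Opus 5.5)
Your proposal is correct and matches the paper's proof essentially step for step. Both argue existence of the limit through monotonicity of \(\Tr e^{L_\varepsilon}\) in \(\varepsilon\), using operator monotonicity of \(\log\), Weyl monotonicity and the increasing scalar exponential. Both get unitary and direct-sum compatibility from blockwise functional calculus, and both get the tensor identity from \(X\otimes\tfrac{I_k}{k}+\varepsilon I=(X+k\varepsilon I)\otimes\tfrac{I_k}{k}\), which yields \(F_\varepsilon\bigl(X\otimes\tfrac{I_k}{k},Y\otimes\tfrac{I_k}{k}\bigr)=F_{k\varepsilon}(X,Y)\). The one difference is that you write out the \(\log\)/\(\exp\) computation behind that last identity, where the paper only states it.
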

		
		\begin{proof}
			Only the logarithmic endpoint requires proof. Put
			\[
			F_\varepsilon(X,Y)
			=
			\Tr\exp\big((1-\theta)\log(X+\varepsilon I)
			+
			\theta\log(Y+\varepsilon I)\big).
			\]
			If \(0<\varepsilon<\delta\), then
			\[
			X+\varepsilon I\le X+\delta I,
			\qquad
			Y+\varepsilon I\le Y+\delta I.
			\]
			Since the logarithm is operator monotone on \((0,\infty)\), we have
			\[
			(1-\theta)\log(X+\varepsilon I)+\theta\log(Y+\varepsilon I)
			\le
			(1-\theta)\log(X+\delta I)+\theta\log(Y+\delta I).
			\]
			If \(H\le K\), then Weyl monotonicity \cite[p.~63]{BhatiaMA} gives
			\(\lambda_j(H)\le\lambda_j(K)\) for every \(j\). Since the scalar
			exponential is increasing, this implies
			\(\Tr e^H\le\Tr e^K\). Hence
			\[
			F_\varepsilon(X,Y)\le F_\delta(X,Y).
			\]
			Thus \(F_\varepsilon(X,Y)\) is monotone increasing as \(\varepsilon\)
			increases, and the limit as \(\varepsilon\downarrow0\) exists in
			\([0,\infty)\).
			
			Unitary covariance is immediate from functional calculus. Direct sums
			follow because, for every \(\varepsilon>0\), the logarithm and exponential
			preserve block diagonal structure, and the trace is additive over direct
			sums.
			
			It remains to record the normalized tensor identity at \(r=0\). For
			\(m\in\mathbb N\),
			\[
			X\otimes\frac{I_m}{m}+\varepsilon I_{nm}
			=
			(X+m\varepsilon I_n)\otimes\frac{I_m}{m},
			\]
			and similarly for \(Y\). Hence
			\[
			F_\varepsilon\left(
			X\otimes\frac{I_m}{m},
			Y\otimes\frac{I_m}{m}
			\right)
			=
			F_{m\varepsilon}(X,Y).
			\]
			Letting \(\varepsilon\downarrow0\) gives
			\[
			T_0^{(\theta)}\left(
			X\otimes\frac{I_m}{m},
			Y\otimes\frac{I_m}{m}
			\right)
			=
			T_0^{(\theta)}(X,Y).
			\]
			The corresponding assertions for \(r>0\) follow directly from continuous
			functional calculus:
			\[
			M_r^{(\theta)}\left(
			X\otimes\frac{I_m}{m},
			Y\otimes\frac{I_m}{m}
			\right)
			=
			M_r^{(\theta)}(X,Y)\otimes\frac{I_m}{m}.
			\]
			This proves all stated compatibility properties.
		\end{proof}
		
		\begin{lemma}\label{lem:unitary-tensor}
			Let \(0<\theta<1\) and \(p>q\ge0\). The boundary-extended functional
			\(\Phi_{p,q}^{(\theta)}\) is unitarily invariant and satisfies
			\[
			\Phi_{p,q}^{(\theta)}
			\left(A\otimes\frac{I_m}{m},B\otimes\frac{I_m}{m}\right)
			=
			\Phi_{p,q}^{(\theta)}(A,B)
			\]
			for all positive semidefinite \(A,B\) for which the two sides are
			interpreted by Lemma~\ref{lem:regularized-boundary}.
		\end{lemma}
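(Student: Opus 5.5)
The lemma follows from Lemma~\ref{lem:regularized-boundary} by treating the two trace functionals separately, since
\[
\Phi_{p,q}^{(\theta)}=T_p^{(\theta)}-T_q^{(\theta)}.
\]
Both unitary invariance and the normalized tensor identity are linear in the functional. It therefore suffices to prove them for each \(T_r^{(\theta)}\) with \(r\in\{p,q\}\), where \(r>0\) or \(r=0\), on positive semidefinite arguments. One point needs checking first: the boundary-extended values are finite, so that subtracting them is legitimate. For \(r>0\) this is clear from continuous functional calculus. For \(r=0\), the monotone limit in Lemma~\ref{lem:regularized-boundary} is bounded below by \(0\) and above by \(F_1(X,Y)<\infty\).

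For unitary invariance with \(r>0\), take a unitary \(U\). Continuous functional calculus commutes with conjugation, so \((UXU^*)^r=UX^rU^*\). Hence
\[
\bigl((1-\theta)UX^rU^*+\theta UY^rU^*\bigr)^{1/r}=U\,M_r^{(\theta)}(X,Y)\,U^*,
\]
and the trace is unchanged. For \(r=0\), note that \(UXU^*+\varepsilon I=U(X+\varepsilon I)U^*\). It follows that \(\log\), the convex combination, and \(\exp\) are all conjugated by \(U\), so \(F_\varepsilon(UXU^*,UYU^*)=F_\varepsilon(X,Y)\) for every \(\varepsilon>0\). Letting \(\varepsilon\downarrow0\) gives invariance of \(T_0^{(\theta)}\).

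For the tensor identity with \(r>0\), I will use
\[
M_r^{(\theta)}\!\left(X\otimes\tfrac{I_m}{m},Y\otimes\tfrac{I_m}{m}\right)=M_r^{(\theta)}(X,Y)\otimes\tfrac{I_m}{m},
\]
recorded at the end of the proof of Lemma~\ref{lem:regularized-boundary}. It holds because \((X\otimes c I)^r=X^r\otimes c^rI\), and then the \((1/r)\)-th power returns the factor \(c=1/m\). Taking traces gives \(\Tr(Z\otimes I_m/m)=\Tr Z\), so \(T_r^{(\theta)}\) is preserved. For \(r=0\), Lemma~\ref{lem:regularized-boundary} gives
\[
F_\varepsilon\!\left(X\otimes\tfrac{I_m}{m},Y\otimes\tfrac{I_m}{m}\right)=F_{m\varepsilon}(X,Y).
\]
Letting \(\varepsilon\downarrow0\), with \(m\varepsilon\downarrow0\) as well, yields the identity for \(T_0^{(\theta)}\). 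Subtracting the results for \(r=p\) and \(r=q\) proves both claims for \(\Phi_{p,q}^{(\theta)}\).

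The only step that needs care is the identity behind \(F_\varepsilon\!\left(X\otimes\tfrac{I_m}{m},\ldots\right)=F_{m\varepsilon}\). Its tensor factorisation is
\[
\log\!\left((X+m\varepsilon I)\otimes\tfrac{I_m}{m}\right)=\log(X+m\varepsilon I)\otimes I_m-\log m\,(I\otimes I_m).
\]
The scalar shift \(-\log m\) survives the convex combination because the weights sum to \(1\). After exponentiation it becomes the factor \(1/m\), and this factor cancels against the trace over the \(I_m\) factor. I will write out this bookkeeping explicitly; no other obstacle is expected.
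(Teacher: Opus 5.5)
Your proposal is correct and follows the paper's proof: you apply the unitary and normalized-tensor compatibility from Lemma~\ref{lem:regularized-boundary} to $T_p^{(\theta)}$ and $T_q^{(\theta)}$ separately and then subtract. Your additional checks, namely the finiteness of the monotone limit at $r=0$ and the explicit $\log m$ bookkeeping behind $F_\varepsilon(X\otimes I_m/m,\,Y\otimes I_m/m)=F_{m\varepsilon}(X,Y)$, are accurate and only spell out what the paper leaves implicit.
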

		
		\begin{proof}
			This is the unitary and normalized tensor compatibility in
			Lemma~\ref{lem:regularized-boundary}, applied separately to \(T_p\) and
			\(T_q\).
		\end{proof}
		
		\begin{lemma}\label{lem:partial-trace-monotonicity}
			Assume
			\[
			0\le q\le1\le p\le2,
			\qquad p>q.
			\]
			Then, for positive definite \(X,Y\in M_n(\mathbb C)\otimes M_m(\mathbb C)\),
			\[
			\Phi_{p,q}^{(\theta)}(\Tr_2X,\Tr_2Y)
			\le
			\Phi_{p,q}^{(\theta)}(X,Y).
			\]
		\end{lemma}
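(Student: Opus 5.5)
The plan is the standard averaging-over-unitaries (twirling) argument, combining the joint convexity of Theorem~\ref{thm:joint-convexity} with the unitary invariance and normalized-tensor compatibility of Lemma~\ref{lem:unitary-tensor}. Choose a finite family of unitaries $U_1,\ldots,U_N$ on $\mathbb C^m$ forming a unitary $1$-design, for instance the $N=m^2$ discrete Weyl (generalized Pauli) operators. These satisfy
\[
\frac1N\sum_{k=1}^N (I_n\otimes U_k)\,X\,(I_n\otimes U_k)^*=(\Tr_2X)\otimes\frac{I_m}{m}
\]
for every $X\in M_n(\mathbb C)\otimes M_m(\mathbb C)$. The same identity holds for $Y$, using the same unitaries.

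The steps in order are as follows.

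First, apply Lemma~\ref{lem:unitary-tensor} to the left-hand side. Since $X,Y>0$, the partial traces $\Tr_2X$ and $\Tr_2Y$ are positive definite, and so are $(\Tr_2X)\otimes I_m/m$ and $(\Tr_2Y)\otimes I_m/m$. The boundary convention is therefore not actually needed, and we obtain
\[
\Phi_{p,q}^{(\theta)}(\Tr_2X,\Tr_2Y)=\Phi_{p,q}^{(\theta)}\bigl((\Tr_2X)\otimes\tfrac{I_m}{m},(\Tr_2Y)\otimes\tfrac{I_m}{m}\bigr).
\]

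Second, rewrite both arguments as the uniform averages $\frac1N\sum_k W_kXW_k^*$ and $\frac1N\sum_k W_kYW_k^*$, where $W_k=I_n\otimes U_k$. Apply joint convexity (Theorem~\ref{thm:joint-convexity}), extended from two-term to $N$-term convex combinations by induction. Every pair $(W_kXW_k^*,W_kYW_k^*)$ lies in $\mathbb P_{nm}\times\mathbb P_{nm}$, so the theorem applies in dimension $nm$. This bounds the expression by $\frac1N\sum_k\Phi_{p,q}^{(\theta)}(W_kXW_k^*,W_kYW_k^*)$.

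Third, use unitary invariance. Since $M_r^{(\theta)}(WXW^*,WYW^*)=W\,M_r^{(\theta)}(X,Y)\,W^*$ for $r\ge0$, including the logarithmic endpoint, each term equals $\Phi_{p,q}^{(\theta)}(X,Y)$. This gives the claimed inequality.

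The only point I expect to need care is verifying the twirling identity. For the Weyl operators $U_{a,b}=Z^aX^b$ with $a,b\in\mathbb Z_m$, the identity reduces to the fact that $\frac1{m^2}\sum_{a,b}U_{a,b}EU_{a,b}^*=(\Tr E)\,I_m/m$ for every $E\in M_m(\mathbb C)$. This can be checked on the matrix units $E=e_{ij}$: averaging over $b$ kills the off-diagonal units and makes all diagonal entries equal. One can alternatively use Haar integration over $U(m)$, but then joint convexity must be passed to integrals, which is a continuity argument. The finite design avoids that step, so I will use it.

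All other ingredients, namely joint convexity, unitary covariance and tensor compatibility, are already available in the paper. Since everything stays positive definite, the regularized boundary convention of Lemma~\ref{lem:regularized-boundary} is not needed here.
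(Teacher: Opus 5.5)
Your proposal is correct and follows the paper's own proof: twirl the second tensor factor with an $m^2$-element unitary error basis, use joint convexity (Theorem~\ref{thm:joint-convexity}) and unitary invariance, then remove the factor $\otimes I_m/m$ via Lemma~\ref{lem:unitary-tensor}. You are also right that $\Tr_2X,\Tr_2Y>0$ makes the boundary convention unnecessary here; the only slip is minor: for $U_{a,b}=Z^aX^b$ it is the average over the clock index $a$ that kills the off-diagonal matrix units, while the average over the shift index $b$ equalizes the diagonal entries.
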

		
		\begin{proof}
			Let \(\{U_\alpha\}_{\alpha=1}^{m^2}\) be a unitary error basis on
			\(\mathbb C^m\). The partial twirling identity gives
			\[
			\frac1{m^2}
			\sum_{\alpha=1}^{m^2}
			(I\otimes U_\alpha)X(I\otimes U_\alpha^*)
			=
			(\Tr_2X)\otimes\frac{I_m}{m},
			\]
			and the same identity holds for \(Y\). By joint convexity and unitary
			invariance,
			\[
			\begin{aligned}
				&\Phi_{p,q}^{(\theta)}
				\left(
				(\Tr_2X)\otimes\frac{I_m}{m},
				(\Tr_2Y)\otimes\frac{I_m}{m}
				\right) \\
				&\quad\le
				\frac1{m^2}
				\sum_{\alpha=1}^{m^2}
				\Phi_{p,q}^{(\theta)}
				\left(
				(I\otimes U_\alpha)X(I\otimes U_\alpha^*),
				(I\otimes U_\alpha)Y(I\otimes U_\alpha^*)
				\right) \\
				&\quad=
				\Phi_{p,q}^{(\theta)}(X,Y).
			\end{aligned}
			\]
			Using Lemma~\ref{lem:unitary-tensor} on the left-hand side gives the
			claimed partial trace inequality.
		\end{proof}
		
		\begin{theorem}\label{thm:dpi}
			Let \(0<\theta<1\) and
			\[
			0\le q\le1\le p\le2,
			\qquad
			p>q.
			\]
			Then \(\Phi_{p,q}^{(\theta)}\) satisfies the data processing inequality
			\[
			\Phi_{p,q}^{(\theta)}(\Lambda(A),\Lambda(B))
			\le
			\Phi_{p,q}^{(\theta)}(A,B)
			\]
			for all \(A,B\in\Pn\) and every completely positive trace-preserving map
			\(\Lambda:M_n(\mathbb C)\to M_m(\mathbb C)\), with the same formula used
			on the appropriate input and output matrix dimensions. If \(\Lambda(A)\)
			or \(\Lambda(B)\) is singular, then the left-hand side is interpreted
			through the regularized boundary convention of
			Lemma~\ref{lem:regularized-boundary}.
		\end{theorem}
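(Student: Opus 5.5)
We follow the convexity, dilation and partial-trace route set up by the preceding lemmas. Write $\Lambda$ in Stinespring form $\Lambda(X)=\operatorname{Tr}_{\mathcal E}(VXV^*)$ with an isometry $V:\mathbb C^n\to\mathbb C^m\otimes\mathcal E$ \cite[Corollary~2.27]{Watrous}. The proof has three steps.

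\textbf{Step 1: invariance under the isometric embedding.} We show that
\[
\Phi_{p,q}^{(\theta)}(VAV^*,VBV^*)=\Phi_{p,q}^{(\theta)}(A,B)
\]
with the boundary convention of Lemma~\ref{lem:regularized-boundary}. Choose a unitary $W$ extending $V$. Then $VAV^*=W(A\oplus 0)W^*$, so unitary invariance reduces the claim to the pair $(A\oplus0,B\oplus0)$.
\begin{itemize}
\item For $r>0$, functional calculus gives $M_r^{(\theta)}(A\oplus0,B\oplus0)=M_r^{(\theta)}(A,B)\oplus0$, so $T_r^{(\theta)}$ is unchanged.
\item For $r=0$, the $\varepsilon$-regularization splits as a direct sum. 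The first block tends to $T_0^{(\theta)}(A,B)$ by continuity, since $A,B>0$. The zero block contributes $\Tr\exp(\log(\varepsilon I))=k\varepsilon\to0$, where $k$ is its size.
\end{itemize}
So both $T_p$ and $T_q$, and hence $\Phi_{p,q}^{(\theta)}$, are preserved.

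\textbf{Step 2: monotonicity under the partial trace.} We want
\[
\Phi(\operatorname{Tr}_{\mathcal E}X,\operatorname{Tr}_{\mathcal E}Y)\le\Phi(X,Y)
\]
for the possibly singular $X=VAV^*$ and $Y=VBV^*$. Lemma~\ref{lem:partial-trace-monotonicity} is stated only for positive definite $X,Y$. We therefore apply it to $X+\delta I$ and $Y+\delta I$, whose partial traces are $\Lambda(A)+\delta d I$ and $\Lambda(B)+\delta dI$ with $d=\dim\mathcal E$. This gives
\[
\Phi(\Lambda(A)+\delta dI,\Lambda(B)+\delta dI)\le \Phi(X+\delta I,Y+\delta I),
\]
and it remains to let $\delta\downarrow0$.

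\textbf{Step 3: the limit $\delta\downarrow0$.} This is the main obstacle.
\begin{itemize}
\item For $r>0$, continuity of $T_r^{(\theta)}$ on pairs of positive semidefinite matrices, via continuous functional calculus, handles both sides.
\item For $r=0$, the relevant quantity is
\[
G(\delta)=\Tr\exp\bigl((1-\theta)\log(X+\delta I)+\theta\log(Y+\delta I)\bigr).
\]
By definition its limit is the regularized $T_0^{(\theta)}(X,Y)$, which is finite by the monotonicity argument in Lemma~\ref{lem:regularized-boundary}. By Step~1 this limit equals $T_0^{(\theta)}(A,B)$. The left-hand side likewise converges to the regularized $T_0^{(\theta)}(\Lambda(A),\Lambda(B))$, by definition when the output is singular and by continuity otherwise.
\end{itemize}
Because $T_0$ enters with a minus sign, what is really needed is convergence of both sides rather than a one-sided bound. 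The defining limits of Lemma~\ref{lem:regularized-boundary} supply exactly this, so the inequality passes to the limit.

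Combining the three steps gives
\[
\Phi(\Lambda(A),\Lambda(B))\le\Phi(VAV^*,VBV^*)=\Phi(A,B),
\]
which is the data processing inequality.
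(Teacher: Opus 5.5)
Your proof is correct and follows essentially the same route as the paper: Stinespring dilation, $\varepsilon$-regularization of the dilated pair, partial-trace monotonicity, and passage to the limit with the boundary convention. The only difference is cosmetic. The paper cancels the complementary block $\varepsilon I_{(V\mathcal H)^\perp}$ at each fixed $\varepsilon$ to get $\Phi_{p,q}^{(\theta)}(X_\varepsilon,Y_\varepsilon)=\Phi_{p,q}^{(\theta)}(A+\varepsilon I,B+\varepsilon I)$ and then uses continuity on the positive definite cone, whereas you take the limit first and identify the regularized $\Phi_{p,q}^{(\theta)}(VAV^*,VBV^*)$ with $\Phi_{p,q}^{(\theta)}(A,B)$ in your Step~1.
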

		
		\begin{proof}
			Let \(\mathcal H\) and \(\mathcal K\) be the input and output spaces of
			\(\Lambda\), respectively. By the finite-dimensional Stinespring
			representation for quantum channels \cite[Corollary~2.27]{Watrous},
			there exist an environment space \(\mathcal E\) and an isometry
			\[
			V:\mathcal H\to\mathcal K\otimes\mathcal E
			\]
			such that
			\[
			\Lambda(X)=\Tr_{\mathcal E}(VXV^*)
			\]
			for every \(X\in M_n(\mathbb C)\), where \(\Tr_{\mathcal E}\) denotes
			the partial trace over \(\mathcal E\). Put \(d_E=\dim\mathcal E\).
			
			The operators \(VAV^*\) and \(VBV^*\) are positive semidefinite on
			\(\mathcal K\otimes\mathcal E\), but they may be singular. For
			\(\varepsilon>0\), set
			\[
			X_\varepsilon
			=
			VAV^*+\varepsilon I_{\mathcal K\otimes\mathcal E},
			\qquad
			Y_\varepsilon
			=
			VBV^*+\varepsilon I_{\mathcal K\otimes\mathcal E}.
			\]
			Then \(X_\varepsilon,Y_\varepsilon\) are positive definite. By
			Lemma~\ref{lem:partial-trace-monotonicity},
			\[
			\Phi_{p,q}^{(\theta)}
			\big(
			\Tr_{\mathcal E}X_\varepsilon,
			\Tr_{\mathcal E}Y_\varepsilon
			\big)
			\le
			\Phi_{p,q}^{(\theta)}(X_\varepsilon,Y_\varepsilon).
			\]
			Since
			\[
			\Tr_{\mathcal E}X_\varepsilon
			=
			\Lambda(A)+\varepsilon d_E I_{\mathcal K},
			\qquad
			\Tr_{\mathcal E}Y_\varepsilon
			=
			\Lambda(B)+\varepsilon d_E I_{\mathcal K},
			\]
			the left-hand side converges, in the sense of
			Lemma~\ref{lem:regularized-boundary}, to
			\[
			\Phi_{p,q}^{(\theta)}(\Lambda(A),\Lambda(B)).
			\]
			
			On the other hand, with respect to the orthogonal decomposition
			\[
			\mathcal K\otimes\mathcal E
			=
			V\mathcal H\oplus(V\mathcal H)^\perp,
			\]
			we have
			\[
			X_\varepsilon
			=
			V(A+\varepsilon I_{\mathcal H})V^*
			\oplus
			\varepsilon I_{(V\mathcal H)^\perp},
			\]
			and similarly
			\[
			Y_\varepsilon
			=
			V(B+\varepsilon I_{\mathcal H})V^*
			\oplus
			\varepsilon I_{(V\mathcal H)^\perp}.
			\]
			The complementary block contributes zero to the trace gap, because the
			two entries are identical there. Hence, by unitary invariance and
			additivity over direct sums,
			\[
			\Phi_{p,q}^{(\theta)}(X_\varepsilon,Y_\varepsilon)
			=
			\Phi_{p,q}^{(\theta)}(A+\varepsilon I_{\mathcal H},
			B+\varepsilon I_{\mathcal H}).
			\]
			Letting \(\varepsilon\downarrow0\) and using continuity on the positive
			definite cone gives
			\[
			\Phi_{p,q}^{(\theta)}(\Lambda(A),\Lambda(B))
			\le
			\Phi_{p,q}^{(\theta)}(A,B).
			\]
			This proves the data processing inequality.
		\end{proof}
		
		\begin{corollary}\label{cor:metric-monotonicity}
			Assume
			\[
			0\le q\le1\le p\le2,
			\qquad p>q.
			\]
			Let $\Lambda$ be a completely positive trace-preserving map and let
			$A\in\Pn$ be such that $\Lambda(A)$ is positive definite. Then, for every
			$H\in\Hn$,
			\[
			g_{\Lambda(A),\theta}^{p,q}(\Lambda(H),\Lambda(H))
			\le
			g_{A,\theta}^{p,q}(H,H).
			\]
		\end{corollary}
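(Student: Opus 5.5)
The plan is to obtain the metric monotonicity by differentiating the data processing inequality of Theorem~\ref{thm:dpi} twice along a path through the diagonal. Fix $A\in\Pn$ with $\Lambda(A)>0$ and $H\in\Hn$, and consider
\[
F(s)=\Phi_{p,q}^{(\theta)}(A,A+sH),\qquad
G(s)=\Phi_{p,q}^{(\theta)}(\Lambda(A),\Lambda(A)+s\Lambda(H)).
\]
For $|s|$ small enough, both $A+sH$ and $\Lambda(A)+s\Lambda(H)$ are positive definite. This uses $\Lambda(A)>0$ and the continuity of the spectrum, and it is exactly where the hypothesis is needed. Linearity gives $\Lambda(A+sH)=\Lambda(A)+s\Lambda(H)$. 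So Theorem~\ref{thm:dpi} yields $G(s)\le F(s)$ for all small $|s|$, and no boundary convention enters, since every argument stays inside the positive definite cones.

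The next step is a second-order Taylor expansion at $s=0$. Both $F$ and $G$ are smooth near $0$, because $T_r^{(\theta)}$ is smooth on $\Pn\times\Pn$; for $r=0$ this follows from the analyticity of $\log$ and $\exp$. By Proposition~\ref{prop:nonneg}, $F(0)=G(0)=0$. By Proposition~\ref{prop:first-derivative}, $F'(0)=G'(0)=0$. Therefore
\[
F(s)=\tfrac{s^2}{2}\,g_{A,\theta}^{p,q}(H,H)+o(s^2),\qquad
G(s)=\tfrac{s^2}{2}\,g_{\Lambda(A),\theta}^{p,q}(\Lambda(H),\Lambda(H))+o(s^2),
\]
where $g$ is the local form from Proposition~\ref{prop:local-metric}. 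Dividing $G(s)\le F(s)$ by $s^2/2$ and letting $s\to0$ gives the claimed inequality.

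The only delicate point is that the local form is defined as the Hessian in the second variable at $(A,A)$ on the positive definite cone. Matching it requires that $\Lambda(H)$ is Hermitian, which holds because $\Lambda$ is Hermiticity-preserving, and that the output dimension $m$ is the one in which $g_{\Lambda(A),\theta}^{p,q}$ is computed. Neither step is a real obstacle. The main care needed is to record the smoothness of $\Phi$ near the diagonal, including at $q=0$, which is already implicit in Lemma~\ref{lem:C2-limit-r0} and Proposition~\ref{prop:hessian-Tr}.
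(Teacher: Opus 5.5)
Your proposal is correct and follows the paper's proof: apply Theorem~\ref{thm:dpi} to the pair $(A,A+sH)$ for small $|s|$. Both sides then vanish with zero first derivative at $s=0$ (Propositions~\ref{prop:nonneg} and~\ref{prop:first-derivative}), so comparing second-order terms gives the claim, and $\Lambda(A)>0$ keeps the output side in the interior of the cone. Your extra remarks, on smoothness at $q=0$ and on $\Lambda(H)$ being Hermitian, only make explicit what the paper leaves implicit.
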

		
		\begin{proof}
			For \(|s|\) sufficiently small, apply Theorem~\ref{thm:dpi} to the pair
			\((A,A+sH)\). Both sides agree to order zero on the diagonal and have
			vanishing first derivative by Proposition~\ref{prop:first-derivative}.
			Comparing the second-order terms at \(s=0\) gives the claim. The
			assumption \(\Lambda(A)>0\) ensures that the left-hand local quadratic
			form is evaluated in the interior of the positive cone.
		\end{proof}
		
		\section{Scalar obstructions and counterexamples outside the convexity range}\label{sec:sharpness}
		
		In this section we sharpen the discussion of the parameter range in
		Theorem~\ref{thm:joint-convexity}. The results below show that the
		convexity and data-processing range cannot be extended to all
		$p>q\ge0$. In the region \(0<q<p<1\), we give an exact one-dimensional scalar
		criterion. This should not be confused with a closed-form classification
		of matrix joint convexity in that region.
		
		\subsection{Weighted scalar reduction}
		
		For $0<\theta<1$ and $r>0$, define the scalar weighted power mean
		\[
		m_{r,\theta}(x)
		=
		\big((1-\theta)+\theta x^r\big)^{1/r},
		\qquad x>0,
		\]
		and define its logarithmic endpoint by
		\[
		m_{0,\theta}(x)=x^\theta.
		\]
		The scalar restriction of $\Phi_{p,q}^{(\theta)}$ along $A=1$, $B=x$ is
		\[
		\phi_{p,q,\theta}(x)
		=
		m_{p,\theta}(x)-m_{q,\theta}(x).
		\]
		If $\Phi_{p,q}^{(\theta)}$ is jointly convex, then
		$\phi_{p,q,\theta}$ must be convex on $(0,\infty)$.
		
		A direct calculation gives, for $r>0$,
		\[
		m_{r,\theta}''(x)
		=
		(r-1)\theta(1-\theta)x^{r-2}
		\big((1-\theta)+\theta x^r\big)^{1/r-2}.
		\]
		For $r=0$,
		\[
		m_{0,\theta}''(x)
		=
		-\theta(1-\theta)x^{\theta-2}.
		\]
		
		\subsection{Failure for every weight when \(1<q<p\)}
		
		\begin{theorem}\label{thm:failure-q-greater-one}
			Let $0<\theta<1$ and $1<q<p$. Then
			$\Phi_{p,q}^{(\theta)}$ is not jointly convex.
		\end{theorem}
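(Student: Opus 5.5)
We argue by contradiction through the weighted scalar reduction. If \(\Phi_{p,q}^{(\theta)}\) were jointly convex on \(\Pn\times\Pn\), then its restriction to the segment \(A=tI\), \(B=xI\), and in particular the one-variable function
\[
\phi_{p,q,\theta}(x)=m_{p,\theta}(x)-m_{q,\theta}(x),
\]
would be convex on \((0,\infty)\). (Restricting to scalar multiples of a fixed rank-one projection, or to \(n=1\), gives the same function up to a positive factor.) It therefore suffices to find one point \(x>0\) with \(\phi_{p,q,\theta}''(x)<0\).

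We use the second-derivative formula recorded above:
\[
\phi''_{p,q,\theta}(x)=\theta(1-\theta)\Bigl[(p-1)x^{p-2}u_p(x)^{1/p-2}-(q-1)x^{q-2}u_q(x)^{1/q-2}\Bigr],
\]
where \(u_r(x)=(1-\theta)+\theta x^r\). Since \(p>q>1\), both bracketed terms are positive, so the question is which one dominates. We look at the regime \(x\downarrow0\). There \(u_r(x)\to1-\theta\), so
\[
\phi''\sim\theta(1-\theta)\Bigl[(p-1)(1-\theta)^{1/p-2}x^{p-2}-(q-1)(1-\theta)^{1/q-2}x^{q-2}\Bigr].
\]
Because \(q<p\), we have \(x^{q-2}/x^{p-2}=x^{q-p}\to\infty\). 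The negative term therefore dominates, and \(\phi''(x)<0\) for all sufficiently small \(x>0\). This holds for every \(\theta\in(0,1)\), because the constants \((1-\theta)^{1/r-2}\) are finite and positive. Concavity near \(0\) contradicts the convexity of \(\phi_{p,q,\theta}\), and hence contradicts the joint convexity of \(\Phi_{p,q}^{(\theta)}\).

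To turn this into an explicit witness, we write \(\phi''(x)=\theta(1-\theta)x^{q-2}\,[\,(p-1)x^{p-q}u_p^{1/p-2}-(q-1)u_q^{1/q-2}\,]\). As \(x\to0\), the first term in the brackets tends to \(0\) and the second tends to \((q-1)(1-\theta)^{1/q-2}>0\). By continuity there is an interval \((0,x_0)\) on which \(\phi''<0\). On that interval the midpoint inequality fails for the pairs \((1,x_1)\) and \((1,x_2)\), since the segment between \((I,x_1I)\) and \((I,x_2I)\) stays in \(\Pn\times\Pn\).

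The only step requiring care is the reduction from matrix joint convexity to convexity of \(\phi_{p,q,\theta}\). This is immediate because \(M_r^{(\theta)}(I,xI)=m_{r,\theta}(x)I\), so \(\Phi_{p,q}^{(\theta)}(I,xI)=n\,\phi_{p,q,\theta}(x)\), and the map \(x\mapsto(I,xI)\) is affine. I expect no genuine obstacle: the asymptotic comparison of \(x^{p-2}\) and \(x^{q-2}\) is elementary. The only point to double-check is the sign and exponent bookkeeping in the formula for \(m_{r,\theta}''\).
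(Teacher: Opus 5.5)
Your proposal is correct and is essentially the paper's proof: restrict to the affine line $x\mapsto(I,xI)$, use the formula for $m_{r,\theta}''$, and note that the subtracted term $(q-1)(1-\theta)^{1/q-2}x^{q-2}$ dominates the $x^{p-2}$ term as $x\downarrow0$. Factoring out $x^{q-2}$, as you do, makes the paper's $(1+o(1))$ asymptotics rigorous; the one slip is your parenthetical about scalar multiples of a rank-one projection, which are singular and hence not in $\Pn$ for $n\ge2$, but the argument never uses it.
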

		
		\begin{proof}
			Using the formula above, as $x\downarrow0$ we have
			\[
			m_{p,\theta}''(x)-m_{q,\theta}''(x)
			=
			\theta(1-\theta)
			\left[
			(p-1)(1-\theta)^{1/p-2}x^{p-2}
			-
			(q-1)(1-\theta)^{1/q-2}x^{q-2}
			\right](1+o(1)).
			\]
			Since $p>q$, we have
			\[
			x^{p-2}=o(x^{q-2})
			\qquad (x\downarrow0).
			\]
			The second term is negative and dominates. Hence
			\[
			\phi_{p,q,\theta}''(x)<0
			\]
			for all sufficiently small $x>0$. Therefore the scalar restriction is not
			convex, so $\Phi_{p,q}^{(\theta)}$ cannot be jointly convex.
		\end{proof}
		
		\subsection{The region \(0<q<p<1\): a scalar criterion and an explicit example}
		
		For $0<q<p<1$, both $m_{p,\theta}$ and $m_{q,\theta}$ are concave. Thus
		the sign of
		\[
		\phi_{p,q,\theta}''(x)
		=
		m_{p,\theta}''(x)-m_{q,\theta}''(x)
		\]
		is more delicate. Define
		\[
		K_{r,\theta}(x)
		=
		(1-r)x^{r-2}
		\big((1-\theta)+\theta x^r\big)^{1/r-2},
		\qquad 0<r<1.
		\]
		Then
		\[
		m_{r,\theta}''(x)=-\theta(1-\theta)K_{r,\theta}(x),
		\]
		and hence
		\[
		\phi_{p,q,\theta}''(x)
		=
		\theta(1-\theta)\big(K_{q,\theta}(x)-K_{p,\theta}(x)\big).
		\]
		
		Put $x=e^t$ and define
		\[
		\Delta_{\theta,p,q}(t)
		=
		\log\frac{K_{p,\theta}(e^t)}{K_{q,\theta}(e^t)}.
		\]
		Explicitly,
		\[
		\begin{aligned}
			\Delta_{\theta,p,q}(t)
			&=
			\log\frac{1-p}{1-q}
			+(p-q)t \\
			&\quad+
			\left(\frac1p-2\right)
			\log\big((1-\theta)+\theta e^{pt}\big)
			-
			\left(\frac1q-2\right)
			\log\big((1-\theta)+\theta e^{qt}\big).
		\end{aligned}
		\]
		Since $p>q$, one has
		\[
		\Delta_{\theta,p,q}(t)\to-\infty
		\qquad (t\to\pm\infty).
		\]
		Therefore the following number is finite and attained:
		\[
		\mathcal S_\theta(p,q)
		=
		\sup_{t\in\mathbb R}\Delta_{\theta,p,q}(t).
		\]
		
		\begin{theorem}\label{thm:scalar-criterion-0qp1}
			Let $0<\theta<1$ and $0<q<p<1$. Then the scalar restriction
			\[
			x\longmapsto \phi_{p,q,\theta}(x)
			\]
			is convex on $(0,\infty)$ if and only if
			\[
			\mathcal S_\theta(p,q)\le0.
			\]
			Equivalently, scalar convexity fails if and only if
			\[
			\mathcal S_\theta(p,q)>0.
			\]
			Moreover, any interior maximizer $t$ of $\Delta_{\theta,p,q}$ satisfies
			\[
			(p-q)
			+
			(1-2p)\frac{\theta e^{pt}}{(1-\theta)+\theta e^{pt}}
			-
			(1-2q)\frac{\theta e^{qt}}{(1-\theta)+\theta e^{qt}}
			=
			0.
			\]
		\end{theorem}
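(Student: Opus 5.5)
The plan is to reduce convexity of the scalar restriction to a pointwise sign condition on its second derivative. After the substitution $x=e^t$, that sign condition becomes exactly the statement that the supremum $\mathcal S_\theta(p,q)$ is nonpositive.

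Since $\phi_{p,q,\theta}$ is $C^2$ on $(0,\infty)$, it is convex there if and only if $\phi_{p,q,\theta}''(x)\ge0$ for all $x>0$. By the identity recorded just before the theorem,
\[
\phi_{p,q,\theta}''(x)=\theta(1-\theta)\bigl(K_{q,\theta}(x)-K_{p,\theta}(x)\bigr).
\]
For $0<r<1$ we have $K_{r,\theta}(x)>0$, because each factor $(1-r)$, $x^{r-2}$ and $\bigl((1-\theta)+\theta x^r\bigr)^{1/r-2}$ is strictly positive. So we may divide, and $\phi_{p,q,\theta}''(x)\ge0$ is equivalent to $K_{p,\theta}(x)/K_{q,\theta}(x)\le1$. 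Taking logarithms and writing $x=e^t$, this says $\Delta_{\theta,p,q}(t)\le0$. Since $x\mapsto\log x$ is a bijection $(0,\infty)\to\mathbb R$, convexity holds if and only if $\Delta_{\theta,p,q}\le0$ on all of $\mathbb R$, that is, if and only if $\mathcal S_\theta(p,q)\le0$. The equivalent formulation is just the contrapositive. If $\mathcal S_\theta(p,q)>0$, the supremum is attained at some $t_0$ with $\Delta_{\theta,p,q}(t_0)>0$, so $\phi_{p,q,\theta}''(e^{t_0})<0$ and convexity fails there.

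The one point needing care is the claim, stated before the theorem, that $\Delta_{\theta,p,q}(t)\to-\infty$ as $t\to\pm\infty$, which makes the supremum attained. The equivalence itself only needs the supremum, so attainment matters only for the critical-point statement; I would nonetheless verify it by computing the asymptotic slopes.

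As $t\to-\infty$, the logarithmic terms tend to the constants $(1/p-2)\log(1-\theta)-(1/q-2)\log(1-\theta)$. The linear term $(p-q)t$ therefore dominates and drives $\Delta_{\theta,p,q}\to-\infty$.

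As $t\to+\infty$, we have $\log\bigl((1-\theta)+\theta e^{rt}\bigr)=rt+\log\theta+o(1)$. The coefficient of $t$ is then
\[
(p-q)+\Bigl(\tfrac1p-2\Bigr)p-\Bigl(\tfrac1q-2\Bigr)q=(p-q)-2p+2q=-(p-q)<0,
\]
so again $\Delta_{\theta,p,q}\to-\infty$. Hence $\Delta_{\theta,p,q}$ is continuous and tends to $-\infty$ at both ends, so its supremum is finite and attained at an interior point.

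For the critical-point equation, I differentiate $\Delta_{\theta,p,q}$ using
\[
\frac{d}{dt}\log\bigl((1-\theta)+\theta e^{rt}\bigr)=\frac{r\theta e^{rt}}{(1-\theta)+\theta e^{rt}}.
\]
The product $(1/r-2)\,r$ equals $1-2r$. Setting $\Delta_{\theta,p,q}'(t)=0$ at a maximizer gives exactly the displayed relation
\[
(p-q)+(1-2p)\frac{\theta e^{pt}}{(1-\theta)+\theta e^{pt}}-(1-2q)\frac{\theta e^{qt}}{(1-\theta)+\theta e^{qt}}=0.
\]

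There is no real obstacle in this argument. The only steps to check carefully are the strict positivity of $K_{r,\theta}$, which justifies the division, and the sign of the asymptotic slope at $+\infty$.
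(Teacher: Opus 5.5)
Your proposal is correct and follows essentially the same route as the paper. You reduce convexity to $\phi_{p,q,\theta}''\ge0$ via the identity $\phi_{p,q,\theta}''=\theta(1-\theta)(K_{q,\theta}-K_{p,\theta})$, divide by $K_{q,\theta}$, take logarithms with $x=e^t$, and differentiate $\Delta_{\theta,p,q}$ to get the critical-point equation. Your extra checks fill in details the paper only asserts: the strict positivity of $K_{r,\theta}$, and the asymptotic slopes $p-q$ at $-\infty$ and $-(p-q)$ at $+\infty$, which show that $\Delta_{\theta,p,q}\to-\infty$ at both ends.
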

		
		\begin{proof}
			For $0<q<p<1$,
			\[
			\phi_{p,q,\theta}''(x)
			=
			\theta(1-\theta)\big(K_{q,\theta}(x)-K_{p,\theta}(x)\big).
			\]
			Since $\theta(1-\theta)>0$, the scalar function
			$\phi_{p,q,\theta}$ is convex if and only if
			\[
			K_{q,\theta}(x)\ge K_{p,\theta}(x)
			\qquad\text{for all }x>0.
			\]
			This is equivalent to
			\[
			\frac{K_{p,\theta}(x)}{K_{q,\theta}(x)}\le1
			\qquad\text{for all }x>0,
			\]
			or, with $x=e^t$,
			\[
			\Delta_{\theta,p,q}(t)\le0
			\qquad\text{for all }t\in\mathbb R.
			\]
			Thus scalar convexity is equivalent to
			$\mathcal S_\theta(p,q)\le0$.
			
			The critical point equation is obtained by differentiating the displayed
			formula for $\Delta_{\theta,p,q}(t)$.
		\end{proof}
		
		Theorem~\ref{thm:scalar-criterion-0qp1} is a scalar result: it reduces
		convexity of the one-dimensional restriction to the sign of
		\(\mathcal S_\theta(p,q)\). It does not give a closed-form classification
		or a matrix-level classification of joint convexity in the region
		\(0<q<p<1\).
		
		As a concrete illustration, the criterion detects explicit failures of
		scalar convexity. For the symmetric weight \(\theta=1/2\), take
		\[
		p=\frac15,\qquad q=\frac1{10},
		\qquad x=10^{-10}.
		\]
		For
		\[
		f_r(x)=\left(\frac{1+x^r}{2}\right)^{1/r},
		\]
		we have
		\[
		f_r''(x)
		=
		\frac{
			(r-1)x^{r-2}
			\left(\dfrac{1+x^r}{2}\right)^{1/r}
		}{
			(1+x^r)^2
		}.
		\]
		Thus
		\[
		f_{1/5}''(10^{-10})
		=
		-\frac45\,10^{18}
		\frac{(101/200)^5}{(101/100)^2}
		=
		-25757525000000000,
		\]
		and
		\[
		f_{1/10}''(10^{-10})
		=
		-\frac9{10}\,10^{19}
		\frac{(11/20)^{10}}{(11/10)^2}
		=
		-18840136025390625.
		\]
		Consequently,
		\[
		(f_{1/5}-f_{1/10})''(10^{-10})
		=
		-6917388974609375<0.
		\]
		Hence the scalar restriction of \(\Phi_{1/5,1/10}^{(1/2)}\) is not
		convex, and therefore \(\Phi_{1/5,1/10}^{(1/2)}\) is not jointly convex.
		
		\subsection{The scalar endpoint \(q=0\)}
		
		The scalar logarithmic endpoint admits a closed-form classification.
		
		\begin{theorem}\label{thm:q0-scalar-classification}
			Let \(0<\theta<1\) and \(p>0\). The scalar function
			\[
			x\longmapsto m_{p,\theta}(x)-m_{0,\theta}(x)
			=
			\big((1-\theta)+\theta x^p\big)^{1/p}-x^\theta
			\]
			is convex on \((0,\infty)\) if and only if
			\[
			p\ge\max\{\theta,1-\theta\}.
			\]
			Equivalently, it is nonconvex if and only if
			\[
			0<p<\max\{\theta,1-\theta\}.
			\]
			Consequently, if
			\[
			0<p<\max\{\theta,1-\theta\},
			\]
			then \(\Phi_{p,0}^{(\theta)}\) is not jointly convex.
		\end{theorem}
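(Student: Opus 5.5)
The plan is to reduce everything to the sign of a single explicit scalar function. From the second-derivative formulas for $m_{r,\theta}$ recorded above,
\[
\frac{d^2}{dx^2}\Big(m_{p,\theta}(x)-m_{0,\theta}(x)\Big)
=\theta(1-\theta)\Big[(p-1)x^{p-2}u(x)^{1/p-2}+x^{\theta-2}\Big],
\qquad u(x)=(1-\theta)+\theta x^p .
\]
If $p\ge1$, both terms are nonnegative and the function is convex. Since $\max\{\theta,1-\theta\}<1$, these values of $p$ lie in the asserted convexity range, so they are consistent with the claim. For $0<p<1$, convexity is equivalent to $G(x)\le1$ for all $x>0$, where
\[
G(x)=(1-p)\,x^{p-\theta}\,u(x)^{(1-2p)/p}.
\]
The final sentence of the theorem will then follow from the remark in the weighted scalar reduction above: joint convexity of $\Phi_{p,0}^{(\theta)}$ forces convexity of the restriction along $A=1$, $B=x$.

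\emph{Necessity.} I will compare the two ends of $(0,\infty)$.
\begin{itemize}
\item As $x\downarrow0$, $u\to1-\theta$, so $G(x)\sim c\,x^{p-\theta}$ with $c>0$. Hence $G$ is unbounded if $p<\theta$.
\item As $x\to\infty$, $u\sim\theta x^p$, so $G(x)\sim c'\,x^{p-\theta+1-2p}=c'\,x^{1-p-\theta}$ with $c'>0$. Hence $G$ is unbounded if $p<1-\theta$.
\end{itemize}
Thus $p<\max\{\theta,1-\theta\}$ makes $G>1$ somewhere, and convexity fails.

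\emph{Sufficiency for $\max\{\theta,1-\theta\}\le p<1$.} I will substitute $x=e^t$ and study
\[
\log G=\log(1-p)+(p-\theta)t+\frac{1-2p}{p}\log\big(1-\theta+\theta e^{pt}\big).
\]
Since $p\ge\max\{\theta,1-\theta\}\ge1/2$, the coefficient $(1-2p)/p$ is $\le0$. The function $t\mapsto\log(1-\theta+\theta e^{pt})$ is convex, so $\log G$ is concave in $t$. It therefore suffices to evaluate $G$ at its critical point.

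\begin{itemize}
\item \emph{Degenerate case $p=1/2$.} Then $\theta=1/2$, and $G\equiv1/2$ directly.
\item \emph{Critical point.} Otherwise set $w=\theta e^{pt}/u\in(0,1)$. The critical-point equation is $(p-\theta)+(1-2p)w=0$, giving
\[
w_*=\frac{p-\theta}{2p-1},\qquad 1-w_*=\frac{p+\theta-1}{2p-1}.
\]
So $w_*\in[0,1]$ exactly because $p\ge\theta$ and $p\ge1-\theta$. In the boundary cases $p=\theta$ or $p=1-\theta$ the supremum is a limit at an end of the line, where $G$ tends to $(1-p)(1-\theta)^{(1-p-\theta)/p}$ or $(1-p)\theta^{(1-2p)/p}$; these I would check directly.
\item \emph{Value at the maximum.} Using $x^p=(1-\theta)w/(\theta(1-w))$ and $u=(1-\theta)/(1-w)$, I expect
\[
\max G=(1-p)\Big(\frac{p-\theta}{\theta(2p-1)}\Big)^{\frac{p-\theta}{p}}\Big(\frac{p+\theta-1}{(1-\theta)(2p-1)}\Big)^{\frac{p+\theta-1}{p}} .
\]
\end{itemize}

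The main obstacle is proving this closed-form quantity is $\le1$. Its two exponents sum to $(2p-1)/p$. I would first try to rewrite it as a weighted geometric mean and apply weighted AM--GM, aiming to bound the product of the two bracketed factors by $\big(1/(1-p)\big)$ or a suitable power of it. If that does not close cleanly, the fallback is to fix $\theta$ and show that the logarithm of the expression is monotone in $p$ on $[\max\{\theta,1-\theta\},1]$. The expression equals $0$ at $p=1$. At the lower endpoint one of the bracketed factors degenerates, and one checks the endpoint value (e.g.\ $(1-\theta)\theta^{(1-2\theta)/\theta}$-type terms) is $\le1$ by elementary calculus.
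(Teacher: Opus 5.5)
\textbf{Gap: the sufficiency direction is not proved.} Your reduction to $G(x)\le 1$ (your $G$ is the paper's $R$), the case $p\ge 1$, the two endpoint asymptotics for necessity, and the passage to joint convexity are all correct and coincide with the paper's argument. Your concavity of $\log G$ in $t=\log x$ and your closed form for $\max G$ are also correct. But the inequality $\max G\le 1$ is the entire content of the ``if'' direction for $\max\{\theta,1-\theta\}\le p<1$, and you do not prove it. You sketch a weighted AM--GM attempt without carrying it out. Your fallback, monotonicity in $p$ of the logarithm of the closed form, is neither established nor reduced to anything checkable. The endpoint value you quote is also wrong: at $p=\theta\ge 1/2$ the supremum is $(1-\theta)^{(1-\theta)/\theta}$, approached as $x\downarrow 0$. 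As written, the proposal establishes necessity and the case $p\ge 1$ only.

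\textbf{How to close it.} No exact maximization is needed. The paper splits at $x=1$:
\begin{itemize}
\item[(i)] For $0<x\le 1$, use $x^{p-\theta}\le 1$ and $u\ge 1-\theta$, with exponent $(1-2p)/p\le 0$. This gives $G\le (1-p)(1-\theta)^{-(2p-1)/p}\le (1-p)/(1-\theta)\le 1$, using $0\le (2p-1)/p<1$ and $p\ge\theta$.
\item[(ii)] For $x\ge 1$, use $u\ge \theta x^p$. This gives $G\le (1-p)\theta^{-(2p-1)/p}x^{1-\theta-p}\le (1-p)/\theta\le 1$, using $p\ge 1-\theta$.
\end{itemize}

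You can also finish inside your own parametrization. The substitution $x^p=(1-\theta)w/(\theta(1-w))$, $u=(1-\theta)/(1-w)$ is valid for every $x$, not only at the critical point, and it gives
\[
G=(1-p)\Big(\frac{w}{\theta}\Big)^{\alpha}\Big(\frac{1-w}{1-\theta}\Big)^{\beta},
\qquad
\alpha=\frac{p-\theta}{p},\quad \beta=\frac{p+\theta-1}{p},
\]
with $w\in(0,1)$, $\alpha,\beta\ge 0$, and $\alpha+\beta=(2p-1)/p\in[0,1)$. Bound $w^{\alpha}(1-w)^{\beta}\le 1$. With $m=\min\{\theta,1-\theta\}$, also $\theta^{\alpha}(1-\theta)^{\beta}\ge m^{\alpha+\beta}\ge m\ge 1-p$. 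Together these give $G\le 1$ for all $x$ at once. This makes the concavity argument, the critical-point computation, and the separate boundary cases $p=\theta$ or $p=1-\theta$ unnecessary.
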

		
		\begin{proof}
			Set
			\[
			a=1-\theta,\qquad b=\theta.
			\]
			
			First assume \(p\ge1\). Then
			\[
			m_{p,\theta}''(x)
			=
			(p-1)\theta(1-\theta)x^{p-2}
			\big((1-\theta)+\theta x^p\big)^{1/p-2}\ge0.
			\]
			Moreover,
			\[
			\frac{d^2}{dx^2}(-x^\theta)
			=
			\theta(1-\theta)x^{\theta-2}\ge0.
			\]
			Hence \(m_{p,\theta}-m_{0,\theta}\) is convex for every \(p\ge1\).
			Since \(\max\{\theta,1-\theta\}<1\), it remains to analyze \(0<p<1\).
			
			For \(0<p<1\), define
			\[
			K_{p,\theta}(x)
			=
			(1-p)x^{p-2}(a+bx^p)^{1/p-2}.
			\]
			Since
			\[
			m_{p,\theta}''(x)=-abK_{p,\theta}(x),
			\qquad
			m_{0,\theta}''(x)=-abx^{b-2},
			\]
			the function \(m_{p,\theta}-m_{0,\theta}\) is convex if and only if
			\[
			K_{p,\theta}(x)\le x^{b-2}
			\qquad\text{for all }x>0.
			\]
			Equivalently,
			\[
			R(x):=
			\frac{K_{p,\theta}(x)}{x^{b-2}}
			=
			(1-p)x^{p-b}(a+bx^p)^{1/p-2}
			\le1
			\qquad\text{for all }x>0.
			\]
			
			Suppose first that \(p<b=\theta\). As \(x\downarrow0\),
			\[
			R(x)
			\sim
			(1-p)a^{1/p-2}x^{p-b}\to\infty.
			\]
			Thus convexity fails.
			
			If \(p<a=1-\theta\), then as \(x\to\infty\),
			\[
			R(x)
			\sim
			(1-p)b^{1/p-2}x^{a-p}\to\infty.
			\]
			Thus convexity also fails.
			
			It remains to prove convexity when \(0<p<1\) and
			\[
			p\ge\max\{a,b\}.
			\]
			In this case \(p\ge1/2\), so \(1/p-2\le0\). If \(0<x\le1\), then
			\[
			a+bx^p\ge a,
			\]
			and \(p\ge b\). Therefore
			\[
			R(x)
			\le
			(1-p)a^{1/p-2}
			=
			(1-p)a^{-(2p-1)/p}.
			\]
			Since \(p\ge b\), we have \(1-p\le a\). Since
			\[
			0\le\frac{2p-1}{p}\le1,
			\]
			we have
			\[
			a^{(2p-1)/p}\ge a\ge1-p.
			\]
			Hence \(R(x)\le1\) for \(0<x\le1\).
			
			If \(x\ge1\), then
			\[
			a+bx^p\ge bx^p,
			\]
			and \(p\ge a\). Thus
			\[
			R(x)
			\le
			(1-p)b^{1/p-2}x^{a-p}
			\le
			(1-p)b^{-(2p-1)/p}.
			\]
			Since \(p\ge a\), we have \(1-p\le b\), and again
			\[
			b^{(2p-1)/p}\ge b\ge1-p.
			\]
			Hence \(R(x)\le1\) for \(x\ge1\).
			
			Therefore \(R(x)\le1\) for all \(x>0\), which proves scalar convexity
			when \(p\ge\max\{\theta,1-\theta\}\). Combining this with the two
			asymptotic obstructions proves the equivalence.
		\end{proof}

		\begin{corollary}\label{cor:small-q-failure}
			Let $0<\theta<1$ and
			\[
			0<p<\max\{\theta,1-\theta\}.
			\]
			Then there exists $\varepsilon>0$ such that, for every
			\[
			0<q<\min\{p,\varepsilon\},
			\]
			the functional $\Phi_{p,q}^{(\theta)}$ is not jointly convex.
		\end{corollary}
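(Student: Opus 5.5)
The plan is a perturbation argument in $q$ starting from the endpoint $q=0$. Theorem~\ref{thm:q0-scalar-classification} already shows that for $0<p<\max\{\theta,1-\theta\}$ the scalar restriction $\phi_{p,0,\theta}(x)=m_{p,\theta}(x)-x^\theta$ is not convex. So there is a point $x_0>0$ with
\[
\phi_{p,0,\theta}''(x_0)=m_{p,\theta}''(x_0)+\theta(1-\theta)x_0^{\theta-2}<0 .
\]
Fix this $x_0$. Since $\phi_{p,q,\theta}''(x_0)=m_{p,\theta}''(x_0)-m_{q,\theta}''(x_0)$, it is enough to show that $m_{q,\theta}''(x_0)\to m_{0,\theta}''(x_0)=-\theta(1-\theta)x_0^{\theta-2}$ as $q\downarrow0$.

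The convergence follows from the explicit formula in Section~\ref{sec:sharpness}:
\[
m_{q,\theta}''(x_0)=(q-1)\theta(1-\theta)x_0^{q-2}\big((1-\theta)+\theta x_0^q\big)^{1/q-2}.
\]
As $q\downarrow0$ we have $(q-1)\to-1$ and $x_0^{q-2}\to x_0^{-2}$. Also $\big((1-\theta)+\theta x_0^q\big)^{1/q}\to x_0^\theta$, which is the scalar version of the limit $M_q^{(\theta)}\to M_0^{(\theta)}$. Finally $\big((1-\theta)+\theta x_0^q\big)^{-2}\to1$. The product therefore tends to $-\theta(1-\theta)x_0^{\theta-2}$, as needed. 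Consequently $q\mapsto\phi_{p,q,\theta}''(x_0)$ is continuous at $q=0$ and negative there. This gives an $\varepsilon>0$ such that $\phi_{p,q,\theta}''(x_0)<0$ for all $0<q<\varepsilon$. We also impose $q<p$ so that $\Phi_{p,q}^{(\theta)}$ is defined.

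To conclude, take any such $q$. The scalar restriction $\phi_{p,q,\theta}$ has negative second derivative at $x_0$, so it is not convex on $(0,\infty)$. As noted at the start of Section~\ref{sec:sharpness}, joint convexity of $\Phi_{p,q}^{(\theta)}$ would force convexity of its restriction along $A=1$, $B=x$, since that restriction is affine in $x$. Hence $\Phi_{p,q}^{(\theta)}$ is not jointly convex.

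Nothing here should be a real obstacle. The one point needing care is to choose $x_0$ first and only then let $q\downarrow0$, so that only pointwise continuity at a single $x_0$ is needed and no uniformity in $x$. For $p\ge1/2$ one could even read off $x_0$ from the asymptotics in the proof of Theorem~\ref{thm:q0-scalar-classification}, taking $x_0$ small if $p<\theta$ or large if $p<1-\theta$, but that is not required.
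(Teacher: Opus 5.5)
Your proposal is correct and follows essentially the same route as the paper's proof. Both fix a point $x_0$ where the $q=0$ scalar restriction has negative second derivative, use the pointwise convergence $m_{q,\theta}''(x_0)\to m_{0,\theta}''(x_0)$ as $q\downarrow0$, and conclude via the scalar restriction; you additionally verify that convergence explicitly from the closed-form second derivative, which the paper only asserts.
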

		
		\begin{proof}
			By Theorem~\ref{thm:q0-scalar-classification}, there exists
			$x_0>0$ such that
			\[
			\frac{d^2}{dx^2}
			\left[
			m_{p,\theta}(x)-m_{0,\theta}(x)
			\right]_{x=x_0}
			<0.
			\]
			For fixed $x_0$, $m_{q,\theta}''(x_0)\to m_{0,\theta}''(x_0)$ as
			$q\downarrow0$. Hence, for all sufficiently small $q>0$,
			\[
			\frac{d^2}{dx^2}
			\left[
			m_{p,\theta}(x)-m_{q,\theta}(x)
			\right]_{x=x_0}
			<0.
			\]
			Thus the scalar restriction is nonconvex, so
			$\Phi_{p,q}^{(\theta)}$ is not jointly convex.
		\end{proof}

		\subsection{Failure for \(p>2\) already when \(q=1\)}
		
		\begin{theorem}\label{thm:p-greater-two-failure}
			Let \(0<\theta<1\) and \(p>2\). Then the upper trace functional
			\[
			T_p^{(\theta)}(A,B)
			=
			\Tr\big((1-\theta)A^p+\theta B^p\big)^{1/p}
			\]
			is not jointly convex in general. Consequently,
			\[
			\Phi_{p,1}^{(\theta)}=T_p^{(\theta)}-T_1^{(\theta)}
			\]
			is not jointly convex in general.
		\end{theorem}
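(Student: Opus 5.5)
The plan is to reduce the weighted functional to the unweighted Minkowski-type trace functional of Carlen and Lieb, and then import their sharpness result for exponents beyond $2$.

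\emph{Step 1: absorb the weights.} As at the beginning of Section~\ref{sec:convexity-dpi}, I will use the invertible linear change of variables
\[
X=(1-\theta)^{1/p}A,\qquad Y=\theta^{1/p}B .
\]
Under it,
\[
T_p^{(\theta)}(A,B)=\Tr\big(X^p+Y^p\big)^{1/p}=:\Upsilon_p(X,Y).
\]
The map $(A,B)\mapsto(X,Y)$ is a linear bijection of $\Pn\times\Pn$ onto itself. Joint convexity is preserved under composition with such a map in both directions. Hence $T_p^{(\theta)}$ is jointly convex if and only if $\Upsilon_p$ is, and this holds independently of $\theta$. So it suffices to show that $\Upsilon_p$ fails to be jointly convex for $p>2$.

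\emph{Step 2: invoke the sharp range of Carlen--Lieb.} In \cite{CarlenLiebI,CarlenLiebII}, the functional $(X_1,\dots,X_k)\mapsto\Tr\big(\sum_j X_j^p\big)^{1/p}$ is shown to be jointly convex for $1\le p\le2$. The same papers show that this range is sharp: for $p>2$ there exist positive definite matrices (already $2\times2$, with $k=2$) at which midpoint convexity fails. I will cite that necessity statement for $k=2$ directly. This gives positive definite $X_1,X_2,Y_1,Y_2$ with
\[
\Upsilon_p\Big(\tfrac{X_1+X_2}2,\tfrac{Y_1+Y_2}2\Big)>\tfrac12\Upsilon_p(X_1,Y_1)+\tfrac12\Upsilon_p(X_2,Y_2).
\]
If the cited counterexample involves singular matrices, a small perturbation $X_j+\delta I$, $Y_j+\delta I$ keeps the strict inequality by continuity. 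Transporting the points back through Step 1 yields the failure for $T_p^{(\theta)}$.

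\emph{Step 3: pass to $\Phi_{p,1}^{(\theta)}$.} Since
\[
T_1^{(\theta)}(A,B)=(1-\theta)\Tr A+\theta\Tr B
\]
is linear, the difference $\Phi_{p,1}^{(\theta)}=T_p^{(\theta)}-T_1^{(\theta)}$ fails midpoint convexity at exactly the same points. This proves the second assertion.

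\emph{Main obstacle.} The real content is the matrix counterexample in Step 2, and it cannot be replaced by a scalar one. For $p\ge1$ the scalar power mean $m_{p,\theta}$ is convex, and by degree-one homogeneity the scalar restriction is jointly convex. A naive second-order check along rays $(P,tQ)$ with rank-one projections is also convex near $t=0$. So the failure is genuinely noncommutative. If a self-contained argument were desired, I would first try a second-variation computation of $\Upsilon_p$ at a non-commuting $2\times2$ pair, perturbing in an off-diagonal Hermitian direction. The aim would be to exhibit a negative Hessian coefficient of divided-difference type, analogous to $\Gamma_r$ in Proposition~\ref{prop:hessian-Tr}, once $p>2$. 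Otherwise I would rely on the cited sharpness theorem.
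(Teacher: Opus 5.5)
Your proposal is correct and follows the paper's proof essentially step for step. You absorb the weights via the invertible rescaling $X=(1-\theta)^{1/p}A$, $Y=\theta^{1/p}B$, invoke the Carlen--Lieb failure of convexity of $(X,Y)\mapsto\Tr(X^p+Y^p)^{1/p}$ for $p>2$, and use that $T_1^{(\theta)}$ is affine to transfer the failure to $\Phi_{p,1}^{(\theta)}$. Your extra remarks, namely the perturbation to positive definite matrices and the observation that no scalar counterexample can exist, are sound refinements the paper does not spell out.
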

		
		\begin{proof}
			Since
			\[
			T_1^{(\theta)}(A,B)
			=
			(1-\theta)\Tr A+\theta\Tr B
			\]
			is affine, joint convexity of \(\Phi_{p,1}^{(\theta)}\) would imply joint
			convexity of \(T_p^{(\theta)}\).
			
			The functional \(T_p^{(\theta)}\) is obtained from the Carlen--Lieb trace
			functional
			\[
			(X,Y)\longmapsto \Tr(X^p+Y^p)^{1/p}
			\]
			by the invertible linear rescaling
			\[
			X=(1-\theta)^{1/p}A,\qquad Y=\theta^{1/p}B.
			\]
			Carlen and Lieb showed that the corresponding trace functional fails
			convexity for \(p>2\) in general~\cite{CarlenLiebI}. Hence
			\(T_p^{(\theta)}\), and therefore \(\Phi_{p,1}^{(\theta)}\), cannot be
			jointly convex in general.
		\end{proof}
		
		\begin{remark}
			For completeness, we record a concrete numerical counterexample to Jensen
			convexity in the symmetric case \(\theta=1/2\), \(p=3\), \(q=1\). Let
			\[
			A_1=
			\begin{pmatrix}
				1.51689413&0.73376717\\
				0.73376717&0.48310587
			\end{pmatrix},
			\quad
			A_2=
			\begin{pmatrix}
				1.24967062&0.39692589\\
				0.39692589&0.75032938
			\end{pmatrix},
			\]
			and
			\[
			B_1=
			\begin{pmatrix}
				1.58191486&0.74863040\\
				0.74863040&0.41808514
			\end{pmatrix},
			\quad
			B_2=
			\begin{pmatrix}
				1.54844564&0.11035089\\
				0.11035089&0.45155436
			\end{pmatrix}.
			\]
			All four matrices are positive definite. A direct calculation gives
			\[
			\Phi_{3,1}^{(1/2)}
			\left(
			\frac{A_1+A_2}{2},
			\frac{B_1+B_2}{2}
			\right)
			\approx 0.0846130853,
			\]
			whereas
			\[
			\frac12\Phi_{3,1}^{(1/2)}(A_1,B_1)
			+
			\frac12\Phi_{3,1}^{(1/2)}(A_2,B_2)
			\approx 0.0837764079.
			\]
			Thus Jensen convexity fails:
			\[
			\Phi_{3,1}^{(1/2)}
			\left(
			\frac{A_1+A_2}{2},
			\frac{B_1+B_2}{2}
			\right)
			>
			\frac12\Phi_{3,1}^{(1/2)}(A_1,B_1)
			+
			\frac12\Phi_{3,1}^{(1/2)}(A_2,B_2).
			\]
		\end{remark}

		\subsection{From convexity failure to data-processing failure}
		The preceding failures of joint convexity also imply failure of the data
		processing inequality.
		
		\begin{lemma}\label{lem:flag-failure}
			Suppose \(\Phi_{p,q}^{(\theta)}\) fails joint convexity. Then
			\(\Phi_{p,q}^{(\theta)}\) fails the data processing inequality for a
			partial-trace channel.
		\end{lemma}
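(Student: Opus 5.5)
We use the standard flag (block-diagonal) construction, which turns a failure of convexity into a failure of monotonicity under a partial trace. Since $\Phi_{p,q}^{(\theta)}$ is continuous on $\Pn\times\Pn$, failure of joint convexity yields a failure of midpoint convexity. Indeed, if midpoint convexity held everywhere, continuity would upgrade it to full convexity. So we may fix $A_1,A_2,B_1,B_2\in\Pn$ and $\lambda\in(0,1)$, which may be taken dyadic or simply $\lambda$ itself, such that
\[
\Phi_{p,q}^{(\theta)}\big(\lambda A_1+(1-\lambda)A_2,\ \lambda B_1+(1-\lambda)B_2\big)
>
\lambda\Phi_{p,q}^{(\theta)}(A_1,B_1)+(1-\lambda)\Phi_{p,q}^{(\theta)}(A_2,B_2).
\]

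Next we build the block states on $\mathbb C^n\otimes\mathbb C^2$:
\[
X=\lambda A_1\otimes e_{11}+(1-\lambda)A_2\otimes e_{22},\qquad
Y=\lambda B_1\otimes e_{11}+(1-\lambda)B_2\otimes e_{22}.
\]
These are positive definite block-diagonal matrices. The channel is the partial trace over the second factor, which is completely positive and trace preserving. It gives $\Tr_2X=\lambda A_1+(1-\lambda)A_2$ and $\Tr_2Y=\lambda B_1+(1-\lambda)B_2$, so the left-hand side of the data processing inequality is exactly the left-hand side above.

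For the right-hand side, additivity of the trace functional over direct sums (functional calculus preserves blocks) gives
\[
\Phi_{p,q}^{(\theta)}(X,Y)=\Phi_{p,q}^{(\theta)}(\lambda A_1,\lambda B_1)+\Phi_{p,q}^{(\theta)}((1-\lambda)A_2,(1-\lambda)B_2).
\]
The remaining ingredient is positive homogeneity of degree one. For $c>0$ we have $M_r^{(\theta)}(cA,cB)=c\,M_r^{(\theta)}(A,B)$ for $r>0$, directly from $(c^r C)^{1/r}=cC^{1/r}$. At $r=0$ we have $\log(cA)=\log c\,I+\log A$, and the weights sum to one. Hence $\Phi_{p,q}^{(\theta)}(X,Y)$ equals the convex combination on the right above, and the displayed strict inequality reads
\[
\Phi_{p,q}^{(\theta)}(\Tr_2X,\Tr_2Y)>\Phi_{p,q}^{(\theta)}(X,Y).
\]
This is a violation of data processing for the partial-trace channel.

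No step here is a real obstacle. The points to check carefully are, first, the reduction from general convexity failure to a single convex combination, which is immediate from the definition of convexity. Second, one should note that all matrices involved are positive definite, so the regularized boundary convention is never needed. Third, for the scalar obstructions it is worth remarking that a scalar failure, with $n=1$, already gives $2\times 2$ commuting examples via this construction.
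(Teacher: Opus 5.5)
Your proposal is correct and matches the paper's proof: both use the flag construction with blocks $\lambda A_1$ and $(1-\lambda)A_2$, take the partial trace over the flag system, and combine additivity over direct sums with degree-one homogeneity (you also verify homogeneity explicitly at $r=0$). The midpoint-convexity detour is unnecessary, because a failure of joint convexity directly supplies some $\lambda\in(0,1)$ with the strict inequality.
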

		
		\begin{proof}
			If joint convexity fails, then there exist positive definite matrices
			\(A_1,A_2,B_1,B_2\) and a number \(0<\lambda<1\) such that
			\[
			\Phi_{p,q}^{(\theta)}
			(\lambda A_1+(1-\lambda)A_2,\lambda B_1+(1-\lambda)B_2)
			\]
			is strictly larger than
			\[
			\lambda\Phi_{p,q}^{(\theta)}(A_1,B_1)
			+
			(1-\lambda)\Phi_{p,q}^{(\theta)}(A_2,B_2).
			\]
			Let \(\mathcal F=\mathbb C^2\), with orthonormal basis
			\(\{|0\rangle,|1\rangle\}\). Define the block diagonal matrices
			\[
			\widehat A
			=
			\lambda |0\rangle\langle0|\otimes A_1
			+
			(1-\lambda)|1\rangle\langle1|\otimes A_2,
			\]
			and
			\[
			\widehat B
			=
			\lambda |0\rangle\langle0|\otimes B_1
			+
			(1-\lambda)|1\rangle\langle1|\otimes B_2.
			\]
			The space \(\mathcal F\) only records which branch of the convex
			combination is being used. Taking the partial trace over \(\mathcal F\)
			gives
			\[
			\Tr_{\mathcal F}\widehat A
			=
			\lambda A_1+(1-\lambda)A_2,
			\qquad
			\Tr_{\mathcal F}\widehat B
			=
			\lambda B_1+(1-\lambda)B_2.
			\]
			
			For every \(c>0\),
			\[
			M_r^{(\theta)}(cA,cB)=cM_r^{(\theta)}(A,B)
			\qquad (r\ge0),
			\]
			and hence
			\[
			\Phi_{p,q}^{(\theta)}(cA,cB)
			=
			c\Phi_{p,q}^{(\theta)}(A,B).
			\]
			Moreover, \(\Phi_{p,q}^{(\theta)}\) is additive over direct sums. Thus
			\[
			\Phi_{p,q}^{(\theta)}(\widehat A,\widehat B)
			=
			\lambda\Phi_{p,q}^{(\theta)}(A_1,B_1)
			+
			(1-\lambda)\Phi_{p,q}^{(\theta)}(A_2,B_2).
			\]
			Combining the preceding identities gives
			\[
			\Phi_{p,q}^{(\theta)}
			(\Tr_{\mathcal F}\widehat A,\Tr_{\mathcal F}\widehat B)
			>
			\Phi_{p,q}^{(\theta)}(\widehat A,\widehat B).
			\]
			Therefore the data processing inequality fails for the partial trace
			channel \(\Tr_{\mathcal F}\).
		\end{proof}
		
		\begin{corollary}\label{cor:sharpness-summary}
			For every \(0<\theta<1\), the data processing inequality fails in
			general in each of the following cases:
			\begin{enumerate}
				\item \(1<q<p\);
				\item \(p>2\) and \(q=1\);
				\item \(q=0\) and \(0<p<\max\{\theta,1-\theta\}\);
				\item more generally, any pair \(0<q<p<1\) with
				$
				\mathcal S_\theta(p,q)>0.
				$
			\end{enumerate}
		\end{corollary}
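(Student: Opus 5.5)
The corollary follows by combining Lemma~\ref{lem:flag-failure} with the four joint-convexity obstructions already established. The plan is to verify, case by case, that $\Phi_{p,q}^{(\theta)}$ fails joint convexity, and then to let the lemma upgrade each failure to a failure of the data processing inequality for the partial-trace channel $\Tr_{\mathcal F}$. Since $\Tr_{\mathcal F}$ is completely positive and trace preserving, it suffices to produce, in each case, positive definite $A_1,A_2,B_1,B_2$ and $\lambda\in(0,1)$ violating Jensen convexity.

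\textbf{The four cases.}
\begin{enumerate}
\item For $1<q<p$, Theorem~\ref{thm:failure-q-greater-one} gives nonconvexity of the scalar restriction $\phi_{p,q,\theta}$. Since $A=1$, $B=x$ is the $1\times1$ case of the functional, this is already a failure of joint convexity on $\mathbb P_1\times\mathbb P_1$.
\item For $p>2$, $q=1$, Theorem~\ref{thm:p-greater-two-failure} gives failure of joint convexity directly.
\item For $q=0$ and $0<p<\max\{\theta,1-\theta\}$, Theorem~\ref{thm:q0-scalar-classification} gives scalar nonconvexity, hence failure of joint convexity.
\item For $0<q<p<1$ with $\mathcal S_\theta(p,q)>0$, Theorem~\ref{thm:scalar-criterion-0qp1} gives scalar nonconvexity, hence failure of joint convexity.
\end{enumerate}

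\textbf{From scalar nonconvexity to a Jensen violation.} In the scalar cases, nonconvexity of $x\mapsto\Phi_{p,q}^{(\theta)}(1,x)$ must be turned into a violation of Jensen convexity. Because the function is $C^2$ with a negative second derivative at some $x_0$, a midpoint test does this. Take $A_1=A_2=1$, $B_{1,2}=x_0\pm\delta$ with small $\delta$, and $\lambda=1/2$; then
\[
\phi(x_0)>\tfrac12\bigl(\phi(x_0+\delta)+\phi(x_0-\delta)\bigr).
\]

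\textbf{Dimension bookkeeping.} Lemma~\ref{lem:flag-failure} is stated for matrices of any size, and in the scalar cases it produces a $2\times2$ diagonal pair $\widehat A,\widehat B$ whose partial trace over $\mathcal F$ is scalar. The inequality is therefore a genuine data-processing failure for a channel $M_2(\mathbb C)\to M_1(\mathbb C)$. To obtain an example in a given dimension $n$, tensor with $I_n/n$. By Lemma~\ref{lem:unitary-tensor}, this preserves the values of $\Phi$ on both sides, and it commutes with the partial trace over $\mathcal F$.

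The only step needing any care is this passage from scalar nonconvexity to a concrete Jensen violation together with the dimension bookkeeping; the rest is direct citation of the earlier results.
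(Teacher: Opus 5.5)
Your proposal is correct and follows the paper's proof. In each case you obtain failure of joint convexity from Theorems~\ref{thm:failure-q-greater-one}, \ref{thm:p-greater-two-failure}, \ref{thm:q0-scalar-classification}, and \ref{thm:scalar-criterion-0qp1}, and then upgrade it to a data-processing failure for the partial-trace channel via Lemma~\ref{lem:flag-failure}; your midpoint-test construction of a Jensen violation and the tensoring with $I_n/n$ are valid details that the paper leaves implicit.
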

		
		\begin{proof}
			In case \((1)\), joint convexity fails by
			Theorem~\ref{thm:failure-q-greater-one}. In case \((2)\), it fails by
			Theorem~\ref{thm:p-greater-two-failure}. In case \((3)\), it fails by
			Theorem~\ref{thm:q0-scalar-classification}. In case \((4)\), the
			condition \(\mathcal S_\theta(p,q)>0\) gives a strict failure of scalar
			convexity by Theorem~\ref{thm:scalar-criterion-0qp1}. Thus in each case
			\(\Phi_{p,q}^{(\theta)}\) fails joint convexity. Applying
			Lemma~\ref{lem:flag-failure} gives failure of the data processing
			inequality in general.
		\end{proof}
		
		\section{Concluding remarks}\label{sec:concluding}
		
		We have introduced a weighted two-parameter family of trace gaps generated
		by matrix power means,
		\[
		\Phi_{p,q}^{(\theta)}(A,B)
		=
		\Tr\big[M_p^{(\theta)}(A,B)-M_q^{(\theta)}(A,B)\big],
		\qquad
		0<\theta<1,\quad p>q\ge0.
		\]
		We proved that every member of this family is a quantum divergence in the
		sense of Bhatia--Gaubert--Jain, including the logarithmic endpoint
		\(q=0\).
		
		In the Carlen--Lieb range
		\[
		0\le q\le1\le p\le2,
		\]
		we established joint convexity and the data processing inequality under
		completely positive trace-preserving maps. We also derived monotonicity
		with respect to the two parameters and computed the local quadratic form
		induced by the diagonal Hessian.
		
		Several familiar quantities occur as special cases or parameter limits.
		The subfamily \(p=1\) recovers arithmetic-versus-power-mean divergences of
		Lam--Le type; the point \((p,q)=(1,1/2)\) gives a constant multiple of the
		square-root Hellinger trace gap; and the point \((p,q)=(1,0)\) gives the
		log-Euclidean Hellinger-type trace gap. The normalized diagonal limit at
		\((p,q)=(1,1)\) recovers the weighted quantum Jensen--Shannon divergence,
		while the logarithmic endpoint limit \(p\downarrow0\), \(q=0\), yields an
		explicit logarithmic Hellinger-type expression.
		
		Finally, we showed that the convexity and data-processing range cannot be
		extended to all \(p>q\ge0\). Joint convexity fails for every \(1<q<p\),
		and for \(p>2\) failure already occurs at \(q=1\) through the Carlen--Lieb
		obstruction. In the remaining scalar region \(0<q<p<1\), we obtained an
		exact one-dimensional criterion in terms of \(\mathcal S_\theta(p,q)\).
		At the scalar logarithmic endpoint \(q=0\), the restriction is nonconvex
		exactly when
		\[
		0<p<\max\{\theta,1-\theta\}.
		\]
		A full matrix-level classification in the region \(0<q<p<1\) remains an
		interesting open problem.
		
		\section*{Declaration of Generative AI and AI-Assisted Technologies in the Writing Process}
		
		During the preparation of this work, the authors used ChatGPT (OpenAI) for language editing, including grammar checking, spelling correction, and stylistic improvements. The authors reviewed and edited the output as needed and take full responsibility for the content of the published article.

	\end{document}